\providecommand{\tabularnewline}{\\}
\numberwithin{equation}{section}
\numberwithin{figure}{section}
\newtheorem{theorem}{Theorem}[section]
\newtheorem{assum}[theorem]{Assumptions}
\newtheorem{lem}[theorem]{Lemma}
\newtheorem{cor}[theorem]{Corollary}
\newtheorem{conj}[theorem]{Conjecture}
\newtheorem{thm}[theorem]{Theorem}
\theoremstyle{definition}
\newtheorem{defn}[theorem]{Definition}
\newtheorem{rem}[theorem]{Remark}
\DeclareMathOperator{\trhb}{tr_{_{\, HB}}}
\DeclareMathOperator{\diag}{diag}
\newcommand{\N}{\mathbb{N}}
\begin{document}

\begin{frontmatter}{}

\title{Hubs-biased resistance distances on graphs and networks}

\author{Ernesto Estrada$^{1,2,3}$ and Delio Mugnolo$^{4}$}

\address{$^{1}$Institute of Mathematics and Applications, University of Zaragoza,
Pedro Cerbuna 12, Zaragoza 50009, Spain; $^{2}$ARAID Foundation,
Government of Aragon, Spain. $^{3}$Institute for Cross-Disciplinary
Physics and Complex Systems (IFISC, UIB-CSIC), Campus Universitat
de les Illes Balears E-07122, Palma de Mallorca, Spain. $^{4}$Lehrgebiet
Analysis, Fakult{\"a}t Mathematik und Informatik, FernUniversit{\"a}t in Hagen,
D-58084 Hagen, Germany; }

\begin{abstract}
We define and study two new kinds of ``effective resistances'' based
on hubs-biased  --  hubs-repelling and hubs-attracting -- models of navigating
a graph/net\-work. We prove that these effective resistances are squared
Euclidean distances between the vertices of a graph. They can be expressed
in terms of the Moore--Penrose pseudoinverse of the hubs-biased Laplacian
matrices of the graph. We define the analogous of the Kirchhoff indices
of the graph based of these resistance distances. We prove several
results for the new resistance distances and the Kirchhoff indices
based on spectral properties of the corresponding Laplacians. After
an intensive computational search we conjecture that the Kirchhoff
index based on the hubs-repelling resistance distance is not smaller
than that based on the standard resistance distance, and that the
last is not smaller than the one based on the hubs-attracting resistance
distance. We also observe that in real-world brain and neural systems
the efficiency of standard random walk processes is as high as that
of hubs-attracting schemes. On the contrary, infrastructures and modular
software networks seem to be designed to be navigated by using their
hubs.

\textit{AMS Subject Classification:} 05C50; 05C82; 15A18; 47N50
\end{abstract}

\begin{keyword}
graph Laplacians; resistance distances; spectral properties; algebraic connectivity; complex networks

\medskip{}

Corresponding author: Ernesto Estrada; email: estrada@ifisc.uib-csic.es
\end{keyword}

\end{frontmatter}{}

\section{Introduction}

Random walk and diffusive models are ubiquitous in mathematics, physics,
biology and social sciences, in particular when the random walker
moves through the vertices and edges of a graph $G=\left(V,E\right)$
\cite{random walks_1,random_walk_1.5,random walks_2,diffusion book,diffusion on networks}.
In this scenario a random walker at the vertex $j\in V$ of $G$ at
time $t$ can move to any of the nearest neighbors of $j$ with equal
probability at time $t+1$ \cite{random walks_1}. That is, if as
illustrated in Fig. \ref{models}(a) the vertex $j$ has three nearest
neighbors $\left\{ k,l,m\right\} $ the random walker can move to
any of them with probability $p_{jm}=p_{jl}=p_{jk}=k_{j}^{-1}$, where
$k_{j}$ is the degree of $j$. We can figure out situations in which
the movement of the random walker at a given position is facilitated
by the large degree of any of its nearest neighbors. 
{Here we use a relaxed definition of the term ``hub''. Although this term is used in network theory for those nodes of exceptionally large degree, we use it here in the following way: If there are two connected vertices of different degree, we call a ``hub'' the one of larger degree. This is formally defined later on in the paper.}

Then, let us suppose that there are situations in which the probability
that the random walker moves to a nearest neighbor of $j$ at time
$t+1$ increases with the degree of the nearest neighbor. This is
illustrated in Fig. \ref{models}(b) where $k_{i}>k_{l}>k_{m}$, and
consequently $p_{jm}<p_{jl}<p_{j{i}}$. We will refer hereafter to this
scenario as the \textit{``hubs-attracting}'' one. Another possibility
is that the random walker is repelled by large degree vertices, such as
for $k_{i}>k_{l}>k_{m}$, we have $p_{jm}>p_{jl}>p_{j{i}}$ as illustrated
in Fig. \ref{models}(c). We will refer to this model as the ``\textit{hubs-repelling}''
one. These scenarios could be relevant in the context of real-world
complex networks where vertices represent the entities of the system
and the edges the interrelation between them \cite{Boccaletti_review,Newman_review,Estrada book}.
An example of hubs-repelling strategies of navigation are some of
the diffusive processes in the brain where there is a high energetic
cost for navigating through the hubs of the system \cite{brain metabolism,brain navigation}.
Hubs-attracting mechanisms could be exhibited, for instance, by diffusive
epidemic processes in which hubs are major attractors of the disease and can be considered at high risk of contagion by spreaders in the network \cite{epidemics}.

\begin{figure}
\begin{centering}
\subfloat[]{\begin{centering}
\includegraphics[width=0.3\textwidth]{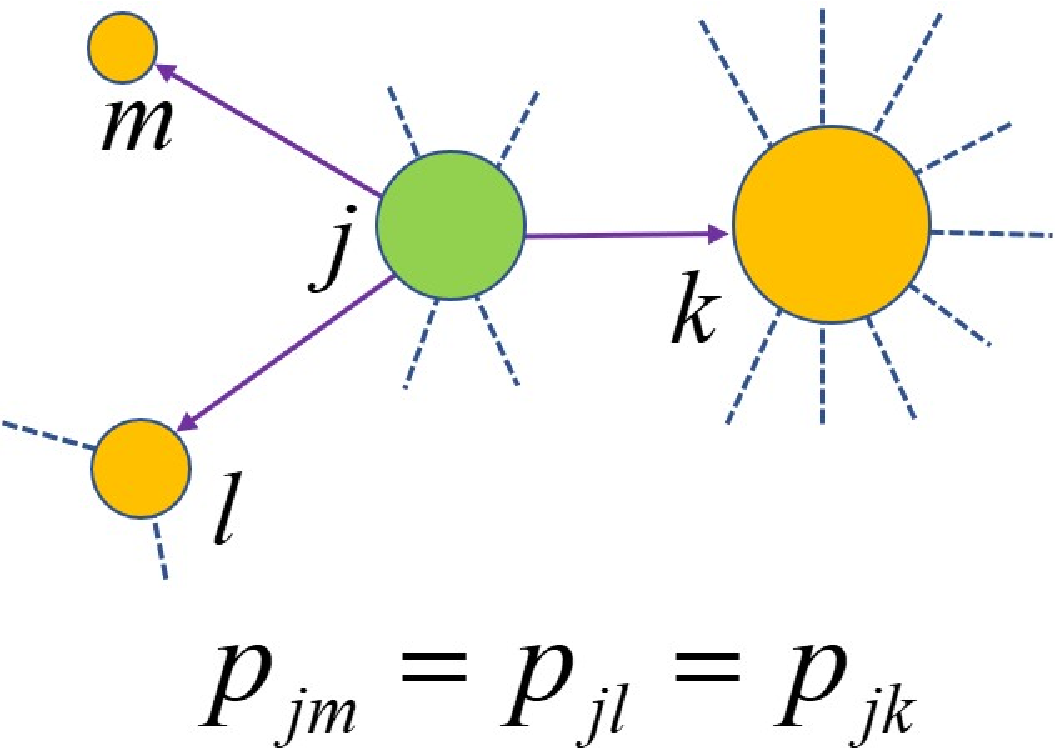}
\par\end{centering}

}\subfloat[]{\begin{centering}
\includegraphics[width=0.3\textwidth]{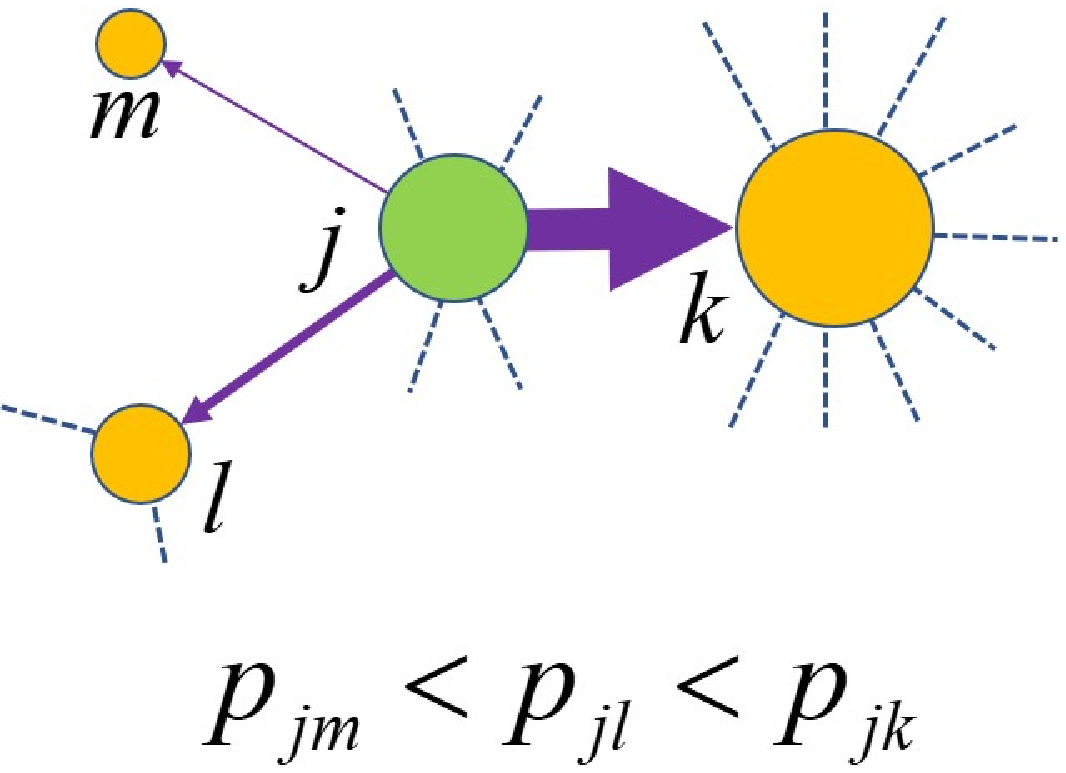}
\par\end{centering}
}\subfloat[]{\begin{centering}
\includegraphics[width=0.3\textwidth]{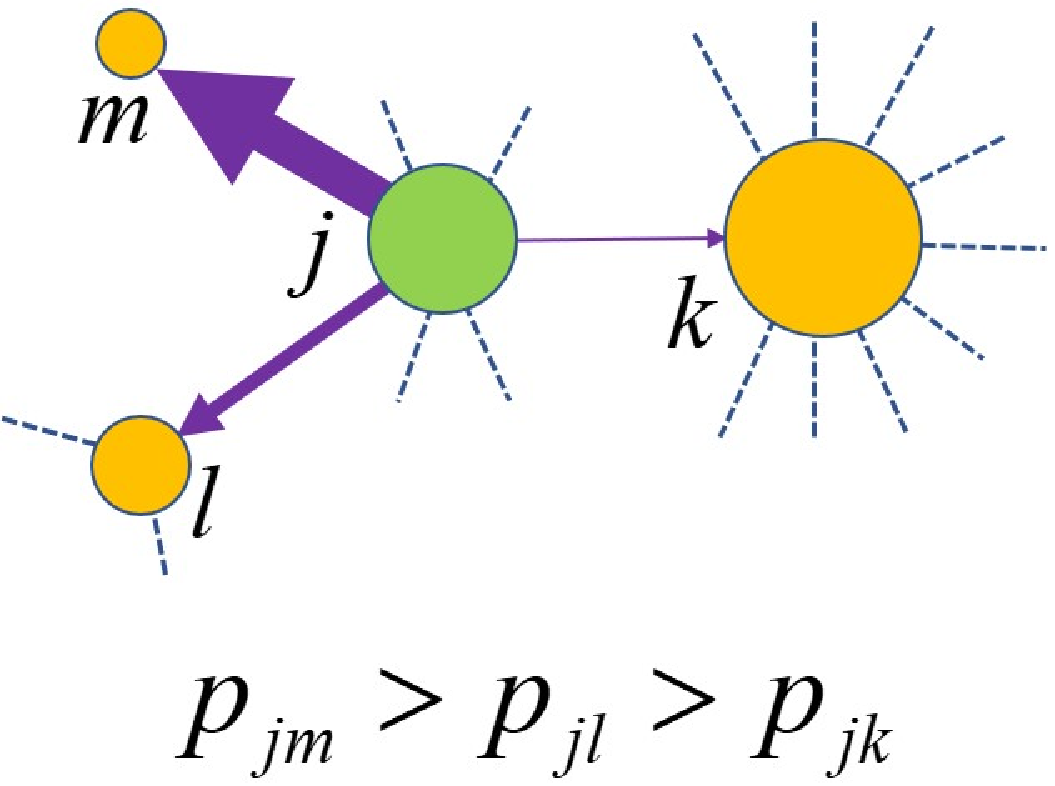}
\par\end{centering}
}
\par\end{centering}
\caption{Schematic illustration of a normal (a), hubs-attracting (b) and hubs-repelling
schemes of a particle hopping on a network.}

\label{models}
\end{figure}

From a mathematical perspective, one of the most important aspects
of the model of random walks on graphs is its connection with the
graph Laplacian matrix \cite{Laplacian_1,Laplacian_2,Laplacian_3,Laplacian_4,Laplacian_5}
and with the concept of resistance distance \cite{resistance_0,resistance_1,resistance_2,resistance_3,resistance_4,resistance_5}.
In a simple graph the resistance distance is the effective resistance
between two vertices $v$ and $w$, which measures the resistance of
the total system when a voltage is connected across $v$ and $w$.
Klein and Randi\'{c} \cite{resistance_1} proved that the effective
resistance is a squared Euclidean distance between the two vertices of
the graph, which can be obtained from the Moore--Penrose pseudoinverse
of the graph Laplacian. It is also known that the commute time between
two vertices $v$ and $w$ of a random walker \cite{resistance_0,resistance_2,resistance_4,resistance_5},
i.e., the number of steps of a random walker starting at $v$, before
arriving at $w$ and returning to $v$ again, is proportional to the
resistance distance between the two vertices.

Strategies for avoiding large/small degree vertices in random-walk processes
on graphs/networks have been proposed in the literature under the
general umbrella of `degree-biased random walks' \cite{biased_1,biased_2,biased_3,biased_4}.
However, in the current work we go beyond the random walk formulation
of the problem and express it in terms of hubs-biased Laplacians \cite{hubs-repelling,hubs-attracting}
and the corresponding resistance distance matrices on graphs. Thus,
we focus here on the algebraic properties of these resistance distance
matrices. We note in passing that the current concept has nothing
in common with the so-called ``degree-resistance'' between a pair
of vertices, which is nothing else that the resistance distance multiplied
by the difference of degrees of the vertices forming the corresponding
edge \cite{degree-resistance}.

Due to the relation between resistance distance and commute time of
random walks on graphs, we study here the efficiency of hubs attracting/repelling
diffusive processes on graphs. In closing, in this work {we
define the hubs-biased resistance distances between pairs of vertices
of a simple graph and study their main spectral properties. We also
propose analogues of the Kirchhoff index \cite{resistance_1,Kirchhoff_1,Kirchhoff_2,Kirchhoff_3},
the semi-sum of all resistance distances in the graph, for the hubs-biased
resistances. We report here several bounds for the two new kinds of
resistance distances as well as for the corresponding Kirchhoff indices.
{
Finally, we study the commute time of the hubs-attracting random walks and analyze their relative improvement over the standard one.}
We observe that certain classes of real-world networks, such as brain/neuronal networks and electronic circuits, have normal random walks as efficient as the hubs-attracting one, while others, like infrastructural networks,
can reduce their average commuting times by 300\% by using the hubs-attracting
mechanism.}

\section{Preliminaries}

{
In this article we consider simple weighted graphs.
We will always impose the following throughout.

\begin{assum}\label{assum:basic}
$G$ is a weighted graph with vertex set $V$ and edge set $E$. The underlying unweighted graph $(V,E)$ is simple and finite: we denote by $n,m\in \mathbb N$ the number of vertices and edges, respectively. To avoid trivialities, we also assume $n\ge 2$ and that no vertex is isolated. We also denote by $\mathcal C$ its number of connected components.

 Additionally,  each edge is assigned a weight by means of a surjective mapping $\varphi:E\rightarrow W$, with $W\subset (0,\infty)$.
\end{assum}
In the following we use interchangeably the terms graphs and networks.
}

%
%
Let $A$ be the adjacency matrix of the (weighted) graph $G$ and
let $k_{i}$ denote the degree of the vertex $i\in V$, 
{
i.e., the sum
of the $i$th row or column of $A$; or equivalently $k_i:=\#\mathcal{N_{\mathit{i}}}$ where $\mathcal{N_{\mathit{j}}}=\left\{ j\in V|\left(i,j\right)\in E\right\} $ is the set of all nearest neighbors of $i$.  
We will denote the minimal and maximum degree by $\delta$ and $\varDelta$,
respectively. Let $j$ be a node such that $j\in{\mathcal{N_{\mathit{i}}}}$. Then, we say that $j$ is a ``hub'', more correctly a ``local hub'', if it has the largest degree among all $j\in{\mathcal{N_{\mathit{i}}}}$.
We will denote by $K$ the diagonal matrix of vertex degrees.
\footnote{In the case of weighted graphs
the degree is often referred as \textit{strength}, but we will use the general term degree here in all cases.}

We use the following condensed notation across this paper. 
If $x_{\alpha}$ is a number depending on an index $\alpha$   --   in
the following typically $\alpha\in\left\{ -1,1\right\} $   --   then we
will write $x_{\alpha}$ to symbolize both $x_{-1}$ and $x_{1}$
depending on the choice on the index $\alpha$.
Let $\ell^2(V)$ be the finite-dimensional Hilbert space of functions on $V$ with respect to the inner product 
\[
\langle f,g\rangle=\sum_{v\in V}f(v)\overline{g(v)},\qquad f,g\in\ell^2(V).
\]
The standard graph Laplacian is an operator in $\ell^2(V)$ which is defined by
\begin{equation}
\bigl(\mathscr{L}f\bigr)(v)\coloneqq\sum_{w\in V:\,(v,w)\in E}{\varphi(vw) }\bigl(f(v)-f(w)\bigr),\qquad f\in\ell^2(V),\label{eq:path_Laplacian}
\end{equation}

where {$\varphi(v,w)\in W$ is the weight of the edge $(v,w)\in E$.} 

Finally, in the following ${1_n}$  will denote the all-ones column vector of order $n$;
$J_{n}$ the $n\times n$ all-one matrix; and $I_{n}$ the identity matrix of order $n$.

\section{Hubs-biased Laplacians and their spectra}\label{sec:spectrum}

Here we introduce the concepts of hubs-biased Laplacians in the context
of resistive networks.

\begin{defn}\label{def:conduct}
A \textit{conductance} function is a function $c:V\times V\rightarrow\mathbb{R}^{+}$
which {respects adjacency between vertices}, i.e., $\left(v,w\right)\in E$
if and only if $c\left(v,w\right)>0$.
\end{defn}

\begin{defn}
The \textit{total conductance} at a vertex $v$ is defined as

\begin{equation}\label{eq:conductance}
\begin{split}
c\left(v\right)&\coloneqq\sum_{\left(v,w\right)\in E}c\left(v,w\right)\\
&=\sum_{w\in V}c\left(v,w\right).
\end{split}
\end{equation}
\end{defn}

(The second equality in~\eqref{eq:conductance} holds in view of Definition~\ref{def:conduct}.)



{
To motivate the following definition, let us now consider a diffusive particle that does not necessarily hop to any nearest neighbor with the same probability. More precisely, in a \textit{hubs-attracting model} the hopping of a diffusive particle from a vertex $i$ to a target neighboring vertex $j$ is favored by a (comparatively) small degree of $i$  and a large degree of $j$ (``$j$ is a hub''), corresponding to a small ratio $\frac{k_i}{k_j}$. On the other hand, the hopping from $i$ to $j$ is disfavored by a (comparatively) large degree of $i$ and a (comparatively) small degree of $j$. Notice that these conditions for the hopping from $i$ to $j$ are different from the ones for the hopping from $j$ to $i$: hubs-attracting diffusion is \textit{not} symmetric.

Specularly, \textit{hubs-repelling} models are conceivable in which the hopping from $i$ to $j$ would be favored by a comparatively large degree of $i$ and a comparatively small degree of $j$, and disfavored by a comparatively small degree of $i$ and a comparatively large degree of $j$.

We will capture this intuition by the following.
}

\begin{defn}\label{def:laplacalpha}
 Let $G$ be a graph satisfying the Assumptions~\ref{assum:basic} and let $\alpha\in\left\{ -1,1\right\}.$
The \textit{hubs-biased Laplacian} corresponding to $\alpha$ is the operator
on $\ell^2(V)$ defined by 
\begin{equation}
\bigl(\mathscr{L}_{\alpha}f\bigr)(v)\coloneqq\sum_{w\in\mathcal N_v}c_{\alpha}\left(v,w\right)\bigl(f(v)-f(w)\bigr),\qquad f\in\ell^2(V),\label{eq:path_Laplacian-1}
\end{equation}
where here and in the following
\begin{equation}\label{eq:cavw}
c_{\alpha}\left(v,w\right):=\left(\dfrac{k_{v}}{k_{w}}\right)^{\alpha}.
\end{equation}

We call $\mathscr{L}_{\alpha}$ the \textit{hubs-repelling Laplacian} if $\alpha=1$ and hence $c_{\alpha}\left(v,w\right)=\dfrac{k_{v}}{k_{w}}$; or \textit{hubs-attracting Laplacian} if $\alpha=-1$ and hence $c_{\alpha}\left(v,w\right)=\dfrac{k_{w}}{k_{v}}$.
\end{defn}

\begin{rem}
Actually, we could easily extend this definition by allowing for any $\alpha\in {\mathbb R}$; for $\alpha=0$, corresponding to the unweighted
case, we would then recover the standard (discrete) Laplacian ${\mathscr{L}}_{0}$.
However, we will not explore this direction in this paper.
\end{rem}

\begin{rem}
In order to understand the main difference between the current approach and some previous approaches introduced for the analysis of weighted directed graphs we should point out the following. First, we analyze here unweighted and undirected graphs, which are then transformed into a very specific kind of weighted directed graphs. In \cite{Boley_2011,Young_2015,Bianchi_2019,Boyd_2021}, among others, the authors focus on weighted directed graphs. Then, they generate either asymmetric Laplacians $L^{a}$ \cite{Boley_2011,Bianchi_2019}
or symmetrized versions $L^{s}$ thereof \cite{Boley_2011,Boyd_2021}. Yet another approach was proposed by Young et al. \cite{Young_2015} where the resistance distances are calculated from a matrix $X=2Q^{T}{\Sigma}Q$ where $\tilde{\mathscr{L}}{\Sigma}+{\Sigma}\tilde{\mathscr{L}}^{T}=I_{n-1}$, $\tilde{\mathscr{L}}=QLQ^{T}$ and the matrix $Q$ obeys the following conditions: $Q1_n=0$, $QQ^{T}=I_{n-1}$ and $Q^{T}Q=I_n-(1/n)J_n$. The matrix $\tilde{\mathscr{L}}=Q{\mathscr{L}}Q^{T}$ is known as the reduced Laplacian. In a further work, Fitch \cite{Fitch_2019} demonstrated that the resistance distances obtained through the matrix $X$ for any pair of vertices in any connected, directed graph, are equal to the resistance distances obtained for a certain symmetric, undirected Laplacian on the same set of nodes and possibly admitting negative edge weights. As the hubs-biased Laplacian matrices are non-symmetric in general, it is obvious that these two approaches are not equivalent.

In the current study a different kind of matricial structure emerges, which is neither the asymmetric cases previously considered nor a symmetric one. The matrices $\mathscr{L}_{\alpha}$ correspond to the class of quasi-reciprocal matrices (see \cite{Harker_1987}), which has not been previously used in the analysis of graphs. An $n\times n$ matrix $M$ is called \textit{quasi-reciprocal} if
\begin{equation}
M_{ij}\neq0\Rightarrow M_{ji}=M_{ij}^{-1},\forall i,j=1,2,\ldots,n.
\end{equation}
Therefore, due to this notable difference the approach developed here for the resistance distance and related descriptions of undirected graphs are substantially different from the ones previously analyzed for weighted directed graphs \cite{Boley_2011,Young_2015,Bianchi_2019,Boyd_2021}.
\end{rem}

Let $e_{v}$, $v\in V$ be a standard orthonormal basis in $\ell^2(V)$
consisting of the vectors 
\begin{equation}
e_{v}(w)\coloneqq\begin{cases}
1 & \text{if \ensuremath{w=v}},\\[0.5ex]
0 & \text{otherwise}.
\end{cases}\label{defev}
\end{equation}

Then, {$\mathscr{L}_{\alpha}$} acts on the
vectors $e_{v}$ as follows: 
\begin{equation}
(\mathscr{L}_{\alpha}e_{v})(w)=\begin{cases}
c_{\alpha}(v) & \text{if \ensuremath{w=v}},\\[0.5ex]
-c_{\alpha}\left(v,w\right) & \text{if \ensuremath{(v,w)\in E}},\\[0.5ex]
0 & \text{otherwise},
\end{cases}\label{Lkev}
\end{equation}
where 
\begin{equation}\label{eq:cav}
c_{\alpha}(v)=\sum_{w\in\mathscr{\mathcal{N_{\mathit{v}}}}}c_{\alpha}\left(v,w\right)
\end{equation}
is the total $\alpha$-conductance of the vertex $v$. Then, the hubs-biased
matrices can be expressed as 
\begin{equation}\label{eq:lalpha-xi}
\mathscr{L}_{\alpha}=\Xi_{\alpha}-K^{\alpha}AK^{-\alpha},
\end{equation}
where 
\begin{equation}
\begin{split}
\Xi_{\alpha}&:=\diag\left(c_\alpha(v)\right)_{v\in V},\\
K&:=\diag\left(k_{v}\right)_{v\in V},
\end{split}
\label{eq:degree matrix}
\end{equation}
and $A$ is the unweighted adjacency matrix of the graph.

Let us note an elementary but important fact concerning the trace of (any) hubs-biased Laplacians.
\begin{lem}
\label{lem:Lemma_trace}
There holds
\begin{equation}
\trhb
\left(G\right)
:=\textnormal{tr}\left(\mathscr{L}_{1}\right)=\textnormal{tr}\left(\mathscr{L}_{-1}\right)=\sum_{v,w\in V}a_{vw}\dfrac{k_{v}}{k_{w}}.\label{eq:lemma_trace}
\end{equation}
\end{lem}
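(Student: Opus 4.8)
The plan is to read off the trace directly from the matrix representation~\eqref{eq:lalpha-xi}, namely $\mathscr{L}_{\alpha}=\Xi_{\alpha}-K^{\alpha}AK^{-\alpha}$, and then to reconcile the two values $\alpha=\pm1$ using nothing more than the symmetry of the adjacency matrix. By linearity of the trace, $\textnormal{tr}(\mathscr{L}_{\alpha})=\textnormal{tr}(\Xi_{\alpha})-\textnormal{tr}(K^{\alpha}AK^{-\alpha})$. The matrix $K^{\alpha}AK^{-\alpha}$ has $(v,w)$ entry $k_{v}^{\alpha}a_{vw}k_{w}^{-\alpha}=a_{vw}(k_{v}/k_{w})^{\alpha}$, which on the diagonal $v=w$ reduces to $a_{vv}=0$ because $G$ is simple and loop-free by Assumption~\ref{assum:basic}. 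Hence $\textnormal{tr}(K^{\alpha}AK^{-\alpha})=0$ and $\textnormal{tr}(\mathscr{L}_{\alpha})=\textnormal{tr}(\Xi_{\alpha})=\sum_{v\in V}c_{\alpha}(v)$.

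Next I would unfold the total $\alpha$-conductance via~\eqref{eq:cav} and~\eqref{eq:cavw}: $c_{\alpha}(v)=\sum_{w\in\mathcal{N}_{v}}(k_{v}/k_{w})^{\alpha}=\sum_{w\in V}a_{vw}(k_{v}/k_{w})^{\alpha}$, where the last equality uses that $a_{vw}=1$ precisely when $w\in\mathcal{N}_{v}$. Summing over $v$ gives the closed form $\textnormal{tr}(\mathscr{L}_{\alpha})=\sum_{v,w\in V}a_{vw}(k_{v}/k_{w})^{\alpha}$, which for $\alpha=1$ is exactly the right-hand side of~\eqref{eq:lemma_trace}.

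The only remaining point — and the one actually carrying content — is to show that the value for $\alpha=-1$ coincides. Here I would relabel the summation indices $v\leftrightarrow w$ in $\textnormal{tr}(\mathscr{L}_{-1})=\sum_{v,w}a_{vw}(k_{w}/k_{v})$ and invoke the symmetry $a_{vw}=a_{wv}$ of the undirected adjacency matrix, turning the sum into $\sum_{v,w}a_{vw}(k_{v}/k_{w})=\textnormal{tr}(\mathscr{L}_{1})$. I anticipate no genuine obstacle: the computation is elementary, and the substantive observation is merely that interchanging the two endpoints of each edge converts the hubs-repelling conductance ratio into the hubs-attracting one, so the symmetry of $A$ forces the two traces to agree.
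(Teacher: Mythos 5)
Your proof is correct and follows essentially the same route as the paper's: both reduce the trace to $\sum_{v}c_{\alpha}(v)=\sum_{v,w}a_{vw}(k_{v}/k_{w})^{\alpha}$ and then use the symmetry $a_{vw}=a_{wv}$ (equivalently, the relabeling $v\leftrightarrow w$) to show the result is independent of $\alpha$. Your only cosmetic addition is verifying explicitly via the decomposition $\mathscr{L}_{\alpha}=\Xi_{\alpha}-K^{\alpha}AK^{-\alpha}$ that the off-diagonal part contributes nothing to the trace, a step the paper takes for granted by reading the diagonal entries from \eqref{Lkev}.
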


\begin{proof}
Let us, for $\alpha\in\{-1,1\}$, consider the
definition of total conductance and in view of \eqref{eq:conductance},

\begin{equation}
\textnormal{tr}\left(\mathscr{L}_{\alpha}\right)=\sum_{v\in V}c_{\alpha}\left(v\right)=\sum_{v,w\in V}a_{vw}\left(\dfrac{k_{v}}{k_{w}}\right)^{\alpha}.
\end{equation}
Now, because $a_{v,w}=1$ if and only if
$a_{w,v}=1$ (and in this case both addends $\dfrac{k_{v}}{k_{w}},\dfrac{k_{w}}{k_{v}}$
appear), we conclude that
\begin{equation}
\textnormal{tr}\left(\mathscr{L}_{\alpha}\right)=\sum_{v\in V}c_{\alpha}\left(v\right)=\sum_{v,w\in V}a_{vw}\dfrac{k_{v}}{k_{w}}:\label{eq:trace_2}
\end{equation}
since the right-hand side is independent of $\alpha$, the claim is proved.
\end{proof}

{
\begin{rem}
A rather natural generalization of Definition~\ref{def:laplacalpha} involves  infinite graphs. Let for a moment   --   unlike under the standing Assumptions~\ref{assum:basic}!   --   the graph $G$ be allowed to have infinitely many vertices. If the degree sequence $(k_v)_{v\in V}$ is still bounded, then by~\eqref{eq:cavw} and~\eqref{eq:cav} the sequence $(c_\alpha(v))_{v\in V}$ is bounded, too; it is then easy to see that the matrices $\Xi_\alpha$, $K^{\pm\alpha}$, and $A$ define bounded linear operators on the (infinite dimensional!) space $\ell^2(V)$. We conclude that for such \textit{infinite, uniformly locally bounded graphs} $\mathcal L_\alpha$ is well-defined as a bounded linear operator on $\ell^2(V)$, too. However, $A$ and hence $\mathcal L_\alpha$ have absolutely continuous spectrum, rather than a discrete set of eigenvalues as in the finite case: for this reason, this setting is not convenient for our purposes: for example, $\mathcal L_\alpha$ will not have finite trace.
\end{rem}
}

The double-sided bound
\begin{equation}
2m\dfrac{\delta}{\varDelta}\leq\trhb\left(G\right)\leq2m\dfrac{\varDelta}{\delta}\label{eq:inequalities_degree}
\end{equation}
on the hubs-biased trace immediately follows from~\eqref{eq:lemma_trace}, since $\delta\leq k_{v}\leq\varDelta$ for all $v\in V$,
and because  $\sum_{v,w\in V}a_{vw}=\sum_{v\in V}k_{v}=2m$ by the Handshaking
Lemma. (We recall that $\delta$ and $\varDelta$ denote the minimum and maximum degree of the
vertices of $G$, respectively, and $m$ is the number of edges of $G$.)

The estimates in~\eqref{eq:inequalities_degree} are rough, yet sharp as both inequalities become equalities for complete graphs. (We should observe that if $G$ is complete, then
$\trhb\left(G\right)$ also agrees with the trace of the standard discrete Laplacian.)

We are able to provide a few improved estimates.

{
\begin{lem}
There holds
\begin{equation}
\dfrac{1}{\varDelta}\sum_{v\in V}k_{v}^{2}\leq\trhb\left(G\right)\leq\dfrac{1}{\delta}\sum_{v\in V}k_{v}^{2}.\label{eq:bound_square}
\end{equation}
\end{lem}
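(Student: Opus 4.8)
The plan is to start from the closed formula for $\trhb(G)$ provided by Lemma~\ref{lem:Lemma_trace}, namely $\trhb(G)=\sum_{v,w\in V}a_{vw}\frac{k_v}{k_w}$, and to localize the estimate at each vertex $v$ before summing. This is a direct refinement of the crude bound~\eqref{eq:inequalities_degree}: there one estimates \emph{both} $k_v$ and $k_w$ against $\delta$ and $\varDelta$, whereas here I would keep one factor $k_v$ intact so that the degree information survives in the final bound.

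First I would fix a vertex $v$ and factor $k_v$ out of the inner sum over its neighbors. Since $a_{vw}\in\{0,1\}$ selects exactly the neighbors $w\in\mathcal{N}_v$, one has $\sum_{w\in V}a_{vw}\frac{k_v}{k_w}=k_v\sum_{w\in\mathcal{N}_v}\frac{1}{k_w}$. The key ingredient is then the elementary two-sided bound $\frac{1}{\varDelta}\le\frac{1}{k_w}\le\frac{1}{\delta}$, which holds for every $w\in V$ because $\delta\le k_w\le\varDelta$ under the Assumptions~\ref{assum:basic}.

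Next I would use that the inner sum has exactly $k_v=\#\mathcal{N}_v$ summands. Replacing each $\frac{1}{k_w}$ by its two bounds and counting gives $\frac{k_v}{\varDelta}\cdot k_v\le k_v\sum_{w\in\mathcal{N}_v}\frac{1}{k_w}\le\frac{k_v}{\delta}\cdot k_v$, that is, the vertexwise estimate $\frac{k_v^2}{\varDelta}\le\sum_{w\in V}a_{vw}\frac{k_v}{k_w}\le\frac{k_v^2}{\delta}$. Summing these inequalities over all $v\in V$ and invoking Lemma~\ref{lem:Lemma_trace} to identify the middle term with $\trhb(G)$ yields precisely~\eqref{eq:bound_square}.

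I do not anticipate a genuine obstacle here: the argument is entirely elementary once the trace formula is in hand. The only point deserving a line of care is the bookkeeping that the number of terms in $\sum_{w\in\mathcal{N}_v}\frac{1}{k_w}$ is exactly the combinatorial degree $k_v$, since it is this count that upgrades the constant estimate used for~\eqref{eq:inequalities_degree} into the quadratic sum $\sum_{v\in V}k_v^2$.
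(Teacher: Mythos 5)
Your proposal is correct and coincides with the paper's own proof: both start from the trace formula of Lemma~\ref{lem:Lemma_trace}, bound $\tfrac{1}{k_w}$ between $\tfrac{1}{\varDelta}$ and $\tfrac{1}{\delta}$, and use the identity $\sum_{w\in V}a_{vw}k_v=k_v^2$ (your counting of the $k_v$ summands in $\sum_{w\in\mathcal{N}_v}\tfrac{1}{k_w}$ is exactly this identity) before summing over $v$. No gap; nothing further to add.
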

\begin{proof}
The bounds follow directly from~\eqref{eq:lemma_trace}, since
\begin{equation}\label{eq:squared_degree}
\sum_{w\in V}a_{vw}k_{v}=k_{v}\sum_{w\in V}a_{vw}=k_{v}^{2}\qquad\hbox{for all $v\in V$}.
\end{equation}
Summing over $v\in V$ yields the claimed bounds.
\end{proof}
}

Because of the well-known identity
\[
\sum_{v\in V}k_{v}^{2}=\sum_{(v,w)\in E}\left(k_{v}+k_{w}\right),
\]
the estimates in~\eqref{eq:bound_square} imply those in \eqref{eq:inequalities_degree},
but are clearly sharper if, e.g., $G$ is bi-regular (recall that $G$ is \textit{bi-regular} if $V$ can be partitioned in $V_{1},V_{2}$ with $k_v\equiv k_1\in \N$ for all $v\in V_1$; and $k_w\equiv k_2\in \N$ for all $w\in V_2$).

{
More explicit bounds in \eqref{eq:bound_square} can be obtained using
known estimates on $\sum_{v\in V}k_{v}^{2}$ and $\sum_{v\in V}k_{v}^{-1}$, the so-called (first) \textit{Zagreb index} and \textit{Randić index} of $G$, respectively: we refer to~\cite{AliGutMil18} for a comprehensive survey of results on this topic, including improving bounds for special classes, like planar or triangle-free graphs, that might be of interest in applications. In particular, we mention the following.
}

\begin{cor}
There holds
\begin{equation}
\dfrac{4m^{2}}{n\varDelta}\leq\trhb\left(G\right)\leq\dfrac{2m\left(2m+\left(n-1\right)\left(\varDelta-\delta\right)\right)}{\delta\left(n+\varDelta-\delta\right)}.\label{eq:estimate_trace}
\end{equation}
{
The lower estimate becomes an equality if and only if $G$ is regular. The upper estimate becomes an equality if and only if $G$ is a graph with $t$ vertices of degree $n-1$ and the remaining $n-t$ vertices forming an independent set.}
\end{cor}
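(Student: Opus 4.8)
The plan is to read both inequalities off the estimate~\eqref{eq:bound_square}, which already sandwiches $\trhb(G)$ between $\frac{1}{\varDelta}\sum_{v\in V}k_v^2$ and $\frac{1}{\delta}\sum_{v\in V}k_v^2$; it then suffices to bound the first Zagreb index $\sum_{v\in V}k_v^2$ from below and above in terms of $n,m,\delta,\varDelta$. Writing $\sum_{v\in V}k_v=2m$ (Handshaking Lemma), the lower estimate follows from the Cauchy--Schwarz inequality $\sum_{v\in V}k_v^2\ge \frac{1}{n}\bigl(\sum_{v\in V}k_v\bigr)^2=\frac{4m^2}{n}$: substituting this into the left-hand estimate of~\eqref{eq:bound_square} gives $\trhb(G)\ge \frac{1}{\varDelta}\cdot\frac{4m^2}{n}=\frac{4m^2}{n\varDelta}$. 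Here Cauchy--Schwarz is an equality precisely when all degrees agree, i.e.\ when $G$ is regular; in that case $\delta=\varDelta$, so the left-hand inequality in~\eqref{eq:bound_square} is tight as well and the whole chain collapses to an equality. This proves the lower estimate together with its equality characterisation.

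For the upper estimate the matching ingredient is the sharp bound $\sum_{v\in V}k_v^2\le \frac{2m\bigl(2m+(n-1)(\varDelta-\delta)\bigr)}{n+\varDelta-\delta}$ on the first Zagreb index; substituting it into the right-hand estimate of~\eqref{eq:bound_square} yields the claim. I expect this to be the main obstacle. Unlike the lower bound, this is a genuine graph inequality: it does \emph{not} follow from the box constraint $\delta\le k_v\le\varDelta$ together with $\sum_{v\in V}k_v=2m$ alone, since the relaxation to arbitrary real sequences in $[\delta,\varDelta]$ already fails (for instance the non-graphical sequence $3,3,3,1$ violates it). The proof must therefore exploit graphicality. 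I would either invoke the corresponding sharp first-Zagreb-index estimate collected in the survey~\cite{AliGutMil18}, or reprove it by a smoothing argument showing that, among graphs with prescribed $n,m,\delta,\varDelta$, the quantity $\sum_{v}k_v^2$ is maximised by a complete split graph --- that is, $t$ dominating vertices of degree $n-1$ together with an independent set of $n-t$ vertices of degree $t=\delta$ --- for which a direct computation gives $\sum_{v}k_v^2=t(n-1)^2+(n-t)t^2$ and confirms equality in the bound.

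It remains to identify the equality case of the upper estimate. The configuration described in the statement is exactly the extremal one for the Zagreb-index inequality above, so the characterisation is obtained by tracing back the equality conditions of the smoothing argument. The point I would treat most carefully is that the corollary composes this Zagreb estimate with the lemma's inequality $\trhb(G)\le\frac{1}{\delta}\sum_{v\in V}k_v^2$, whose own equality condition (every neighbour of an edge having degree $\delta$) interacts non-trivially with the complete-split structure; verifying precisely which graphs realise equality in the \emph{composite} bound is the delicate step and the place where the argument needs the most attention.
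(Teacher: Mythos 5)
Your route for the two inequalities is the same as the paper's: its proof also combines the sandwich \eqref{eq:bound_square} with the Zagreb-index estimates $\tfrac{4m^2}{n}\le\sum_{v}k_v^2\le\tfrac{2m\left(2m+(n-1)(\varDelta-\delta)\right)}{n+\varDelta-\delta}$, and for these bounds (and their extremal graphs) it simply cites \cite{Cio06} and \cite{Das04} rather than reproving them, so your Cauchy--Schwarz argument and your smoothing sketch are just inlined versions of those citations. Your handling of the lower equality case (Cauchy--Schwarz equality forces regularity, and regularity conversely collapses the whole chain) is complete and correct.

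The step you flag as delicate --- equality in the \emph{composite} upper bound --- is exactly where the paper's own statement goes wrong, and your suspicion about the interaction is justified; neither the paper's proof nor your proposal resolves it, but it can be resolved, and the answer contradicts the claimed characterization. Equality in $\trhb\left(G\right)\le\frac{1}{\delta}\sum_{v}k_v^2$ forces every vertex adjacent to some vertex to have degree $\delta$; since no vertex is isolated, this means $G$ is $\delta$-regular. Hence the upper estimate in \eqref{eq:estimate_trace} is an equality if and only if $G$ is regular (for a $d$-regular graph both links of the chain are tight, both sides equalling $2m$), and it is \emph{strict} for every non-regular complete split graph. For instance, for the star $K_{1,n-1}$ (the case $t=1$) one has $\trhb\left(K_{1,n-1}\right)=(n-1)^2+1$, while the right-hand side of \eqref{eq:estimate_trace} equals $n(n-1)$; these differ by $n-2>0$ for $n\ge 3$, even though the star attains equality in Das's Zagreb bound. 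In short: the complete-split characterization belongs to the Zagreb inequality $\sum_v k_v^2\le\tfrac{2m\left(2m+(n-1)(\varDelta-\delta)\right)}{n+\varDelta-\delta}$, not to the composite trace bound, and the paper imported it without checking the extra equality condition coming from \eqref{eq:bound_square}. The honest conclusion of your argument is therefore the two inequalities together with ``equality (in either bound) if and only if $G$ is regular.''
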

\begin{proof}
The claimed bounds follow from the known estimates
\begin{equation}\label{eq:dassur}
\dfrac{4m^{2}}{n}\leq\sum_{v\in V}k_{v}^{2}\leq\dfrac{2m\left(2m+\left(n-1\right)\left(\varDelta-\delta\right)\right)}{n+\varDelta-\delta}:
\end{equation}
cf.~\cite[Remark 4 and Remark 5]{Cio06} and~\cite{Das04}, where also the extremal graphs are characterized.
\end{proof}

\begin{rem}
(1) The lower estimate in \eqref{eq:estimate_trace} is significantly
better than the lower estimate in \eqref{eq:inequalities_degree}:
indeed, the net effect is like replacing $\delta$ by the average
degree $\dfrac{2m}{n}$ in \eqref{eq:inequalities_degree}. The upper
estimate in \eqref{eq:estimate_trace} is better than the upper estimate
in \eqref{eq:inequalities_degree} if and only if 

\begin{equation}
2m+\left(n-1\right)\left(\varDelta-\delta\right)\leq\varDelta\left(n+\varDelta-\delta\right),\label{eq:casual}
\end{equation}
which is sometimes the case, for instance for any regular graph, and
sometimes not, for instance for any path on more than 2 edges. The
equality in \eqref{eq:casual} holds for complete graphs.

(2)
Further estimates on the hubs-biased trace may be obtained re-writing~\eqref{eq:lemma_trace} in alternative ways, including
\begin{equation}
\sum_{v,w\in V}a_{vw}\dfrac{k_{v}}{k_{w}}=\sum_{v\in V}k_{v}\left(\sum_{w\in V}\dfrac{a_{vw}}{k_{w}}\right)=\sum_{v\in V}k_{v}\left(\sum_{w\in\mathcal{N}_{v}}\dfrac{1}{k_{w}}\right),\label{eq:general_trace_1}
\end{equation}
or
\begin{equation}
\sum_{v,w\in V}a_{vw}\dfrac{k_{v}}{k_{w}}=\sum_{v\in V}\dfrac{1}{k_{v}}\left(\sum_{w\in V}a_{vw}k_{w}\right)=\sum_{v\in V}\dfrac{1}{k_{v}}\left(\sum_{w\in\mathcal{N}_{v}}k_{w}\right)\label{eq:general_trace_2}
\end{equation}
{(we recall that $\mathcal N_v$ is the set of nearest neighbors of $v$).}
Considering the minima $\delta_{\mathcal{N}_{v}}$ and maxima $\varDelta_{\mathcal{N}_{v}}$ of the degree function in neighborhoods $\mathcal N_v$  we can deduce from \eqref{eq:general_trace_1}
and \eqref{eq:general_trace_2} the following sharper estimates:

\begin{equation}
\sum_{v\in V}\dfrac{k_{v}^{2}}{\varDelta_{\mathcal{N}_{v}}}\leq\trhb\left(G\right)\leq\sum_{v\in V}\dfrac{k_{v}^{2}}{\delta_{\mathcal{N}_{v}}},\label{eq:lower_estimate}
\end{equation}

\begin{equation}
\sum_{v\in V}\delta_{\mathcal{N}_{v}}\leq\trhb\left(G\right)\leq\sum_{v\in V}\varDelta_{\mathcal{N}_{v}},
\end{equation}
which shows, for instance, that $\trhb\left(G\right)=2pq$
for the complete bipartite graph $K_{p,q}.$

(3) Invoking Titu's Lemma we also obtain
\begin{equation}
\sum_{w\in\mathcal{N}_{v}}\dfrac{1}{k_{w}}\geq\dfrac{\left(\sum_{w\in\mathcal{N}_{v}}1\right)^{2}}{\sum_{w\in\mathcal{N}_{v}}k_{w}}=\dfrac{k_{v}^{2}}{\sum_{w\in\mathcal{N}_{v}}k_{w}}.
\end{equation}

Because Titu's Lemma is equivalent to the Cauchy--Schwarz inequality,
the previous inequality becomes an equality if and only if  $\left(1\right)_{w\in\mathcal{N}_{v}}$
and $\left(k_{w}\right)_{w\in\mathcal{N}_{v}}$ are linearly dependent,
i.e., if and only if  the degree function is constant on each neighborhood, which
is the case for instance in regular and bi-regular graphs. We finally deduce the estimate
\begin{equation}
\trhb\left(G\right)\geq\sum_{v\in V}\dfrac{k_{v}^{3}}{\sum_{w\in\mathcal{N}_{v}}k_{w}},
\end{equation}
which of course implies the lower estimate in \eqref{eq:lower_estimate}.
\end{rem}

Let us now collect a few important properties of the hubs-biased Laplacian matrices of any graphs satisfying our standing Assumptions~\ref{assum:basic}.

\begin{thm}
\label{thm:real spectrum}
Let $\alpha\in\{-1,1\}$. Then the hubs-biased Laplacian matrix ${\mathscr{L}}_{\alpha}$
enjoys the following properties:

(i) its eigenvalues are real;

(ii) it is positive semidefinite;

(iii) $\textnormal{{rank\,}}{\mathscr{L}}_{\alpha}=n-\mathscr{C}$;

(iv) it can be diagonalized as ${\mathscr{L}}_{\alpha}=\left(KU_{\alpha}\right)\Lambda_{\alpha}\left(KU_{\alpha}\right)^{-1}$, where
$\Xi_{\alpha}-A=U_{\alpha}\Lambda_{\alpha}U_{\alpha}^{-1}$ and
$K$ is as in \eqref{eq:degree matrix}.
\end{thm}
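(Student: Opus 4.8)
The plan is to reduce all four assertions to a single real symmetric matrix by means of a diagonal similarity. Set $S_{\alpha}:=\Xi_{\alpha}-A$; since $\Xi_{\alpha}$ is diagonal and $A$ is the symmetric adjacency matrix, $S_{\alpha}$ is real symmetric. The key identity, read off directly from~\eqref{eq:lalpha-xi}, is
\[
\mathscr{L}_{\alpha}=K^{\alpha}S_{\alpha}K^{-\alpha}.
\]
Indeed, conjugating $\mathscr{L}_{\alpha}=\Xi_{\alpha}-K^{\alpha}AK^{-\alpha}$ by the invertible diagonal matrix $K^{-\alpha}$ leaves the diagonal part $\Xi_{\alpha}$ unchanged (diagonal matrices commute) and turns $K^{\alpha}AK^{-\alpha}$ back into $A$. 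Thus $\mathscr{L}_{\alpha}$ is similar to the symmetric matrix $S_{\alpha}$ through $K^{\alpha}$.

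Assertions (i) and (iv) are then immediate. Similar matrices have the same spectrum, and a real symmetric matrix has real eigenvalues, which proves (i). For (iv), the spectral theorem provides an orthogonal $U_{\alpha}$ and a real diagonal $\Lambda_{\alpha}$ with $S_{\alpha}=U_{\alpha}\Lambda_{\alpha}U_{\alpha}^{-1}$; inserting this into the similarity gives
\[
\mathscr{L}_{\alpha}=K^{\alpha}U_{\alpha}\Lambda_{\alpha}U_{\alpha}^{-1}K^{-\alpha}=\left(K^{\alpha}U_{\alpha}\right)\Lambda_{\alpha}\left(K^{\alpha}U_{\alpha}\right)^{-1},
\]
which for $\alpha=1$ is precisely the stated factorization, with conjugating matrix $KU_{1}$.

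For (ii), because $\mathscr{L}_{\alpha}$ and $S_{\alpha}$ share the same (real) eigenvalues, it suffices to prove that the symmetric matrix $S_{\alpha}$ is positive semidefinite. I would do this by expanding its quadratic form as a sum over the (undirected) edges: using $c_{\alpha}(v)=\sum_{w\in\mathcal{N}_{v}}c_{\alpha}(v,w)$ from~\eqref{eq:cav}, one obtains
\[
\langle S_{\alpha}g,g\rangle=\sum_{(v,w)\in E}\Bigl(c_{\alpha}(v,w)\,g(v)^{2}+c_{\alpha}(w,v)\,g(w)^{2}-2\,g(v)\,g(w)\Bigr),
\]
each edge being counted once. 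The crux --- and the step I expect to be the main obstacle --- is the quasi-reciprocal identity $c_{\alpha}(v,w)\,c_{\alpha}(w,v)=(k_{v}/k_{w})^{\alpha}(k_{w}/k_{v})^{\alpha}=1$. By the AM--GM inequality the first two terms of each summand are at least $2\sqrt{c_{\alpha}(v,w)\,c_{\alpha}(w,v)}\,\lvert g(v)g(w)\rvert=2\lvert g(v)g(w)\rvert\ge 2\,g(v)\,g(w)$, so every edge contribution is nonnegative; hence $S_{\alpha}$, and therefore $\mathscr{L}_{\alpha}$, is positive semidefinite.

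Finally, for (iii) I would argue directly on $\mathscr{L}_{\alpha}$ via rank--nullity, $\operatorname{rank}\mathscr{L}_{\alpha}=n-\dim\ker\mathscr{L}_{\alpha}$, and identify the kernel. Every function that is constant on each connected component is annihilated by $\mathscr{L}_{\alpha}$ by~\eqref{eq:path_Laplacian-1}, so the $\mathscr{C}$ component indicators yield $\dim\ker\mathscr{L}_{\alpha}\ge\mathscr{C}$. For the converse I would invoke a discrete maximum principle: rewriting $\mathscr{L}_{\alpha}f=0$ as
\[
f(v)=\sum_{w\in\mathcal{N}_{v}}\frac{c_{\alpha}(v,w)}{c_{\alpha}(v)}\,f(w)
\]
exhibits each value $f(v)$ as a convex combination (the weights are positive and sum to $1$) of the values of $f$ at the neighbors of $v$; a vertex at which $f$ attains its maximum over a component then forces $f$ to be constant on that component, whence $\dim\ker\mathscr{L}_{\alpha}\le\mathscr{C}$. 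Combining the two bounds gives $\dim\ker\mathscr{L}_{\alpha}=\mathscr{C}$ and therefore $\operatorname{rank}\mathscr{L}_{\alpha}=n-\mathscr{C}$, completing the proof.
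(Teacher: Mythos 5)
Your proof is correct, and for parts (i), (ii) and (iv) it follows essentially the same route as the paper: the diagonal similarity $\mathscr{L}_\alpha = K^{\alpha}\left(\Xi_\alpha - A\right)K^{-\alpha}$, an edge-by-edge expansion of the quadratic form of $\Xi_\alpha - A$, and the spectral theorem applied to $\Xi_\alpha - A$. (For (ii) the paper completes the square, writing each edge contribution as $\bigl(k_v^{\alpha/2}k_w^{-\alpha/2}x_v - k_w^{\alpha/2}k_v^{-\alpha/2}x_w\bigr)^{2}$; your AM--GM step using the quasi-reciprocal identity $c_\alpha(v,w)\,c_\alpha(w,v)=1$ is the same inequality in disguise.) Two genuine differences are worth recording. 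First, for (iii) the paper asserts that $\mathscr{L}_\alpha z = 0$ forces $z_i = z_j$ across every edge, a fact that really rests on applying the sum-of-squares identity to $K^{-\alpha}z$ --- a detour the paper leaves implicit --- whereas you prove the kernel characterization by a discrete maximum principle on the stochastic rewriting $f(v)=\sum_{w\in\mathcal{N}_v}\frac{c_\alpha(v,w)}{c_\alpha(v)}f(w)$; your argument is self-contained, works directly on $\mathscr{L}_\alpha$ without passing through the symmetrized matrix, and closes the small gap in the paper's version. Second, in (ii) you transfer positive semidefiniteness from $\Xi_\alpha - A$ to $\mathscr{L}_\alpha$ through the spectrum (similar matrices have the same eigenvalues), while the paper transfers the quadratic-form inequality itself ``by similarity''; since similarity does not preserve quadratic forms of non-symmetric matrices, your spectral phrasing is the defensible one, and it is all that is used later in the paper. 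A final small point in your favor: the conjugating matrix is $K^{\alpha}U_\alpha$, as you write; the $KU_\alpha$ appearing in the statement and in the paper's proof is literally accurate only for $\alpha = 1$, as your remark implicitly notes.
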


\begin{proof}
{ 
(i) Under the Assumptions~\ref{assum:basic}, no vertex is isolated, hence $k_i\ge 1$ for all $i$. Therefore, for any $\alpha\in\left\{ -1,1\right\}$ the diagonal matrix $K^\alpha$ is invertible and we observe that}
\begin{equation}
\begin{split}{\mathscr{L}}_{\alpha} & =\Xi_{\alpha}-K^{\alpha}AK^{-\alpha}\\
 & =K^{\alpha}\left(K^{-\alpha}\Xi_{\alpha}K^{\alpha}-A\right)K^{-\alpha}\\
 & =K^{\alpha}\left(\Xi_{\alpha}-A\right)K^{-\alpha},
\end{split}
\end{equation}
{since $\Xi_\alpha$ is diagonal, too. We conclude that}
 {${\mathscr{L}}_{\alpha}$}
is similar to the symmetric matrix $\left(\Xi_{\alpha}-A\right)$,
and so their eigenvalues are real.

(ii) Now let $x\in\ell^2(V)$ and $x\neq{0}$. Then,
we can write

\begin{equation}
\begin{split}x^{T}\left(\Xi_{\alpha}-A\right)x & =\sum_{\left(i,j\right)\in E}\left(\left(k_{i}^{\alpha/2}k_{j}^{-\alpha/2}\right)x_{i}-\left(k_{j}^{\alpha/2}k_{i}^{-\alpha/2}\right)x_{j}\right)^{2}\end{split}\ge 0.
\end{equation}
Therefore, because {$\Xi_{\alpha}-A$}
and ${\mathscr{L}}_{\alpha}$ are similar, we have that ${x}^{T}{\mathscr{L}}_{\alpha}{x}\geq0$.

(iii)
Let us now prove that the dimension of the null space of {${\mathscr{L}}_{\alpha}$}
is $\mathscr{C}$. Let ${z}$ be a vector such that ${\mathscr{L}}_{\alpha}{z}=0$.
This implies that for every $\left(i,j\right)\in E$, $z_{i}=z_{j}$.
Therefore ${z}$ takes the same value on all vertices of the same
connected component, which indicates that the dimension of the null
space is $\mathscr{C}$, and so $\textnormal{{rank\,}}{\mathscr{L}}_{\alpha}=n-\mathscr{C}$.

(iv) Finally, we also have that {because $\Xi_{\alpha}-A$
is symmetric we can write it as: $\Xi_{\alpha}-A=U_{\alpha}\Lambda_{\alpha}U_{\alpha}^{-1}$.
Thus, ${\mathscr{L}}_{\alpha}=\left(KU_{\alpha}\right)\Lambda_{\alpha}\left(KU_{\alpha}\right)^{-1}$
which indicates that all hubs-biased Laplacians are diagonalizable.}
\end{proof}
We know from Theorem \ref{thm:real spectrum} that all eigenvalues of any hubs-biased Laplacian are real positive: we will denote them by $\rho_{\alpha,1}\ldots,\rho_{\alpha_n}$ in ascending order, i.e.,
\[
0=\rho_{\alpha,1}\le \ldots \le \rho_{\alpha,n}.
\] 
Let us conclude this section deducing some estimates on them. {To avoid trivialities, and in view of Theorem~\ref{thm:real spectrum}.(iii), in the remainder of this section we always assume the graph $G$ to be connected.}

\begin{cor}\label{cor:spectral_bound}
Let $\alpha\in\left\{ -1,1\right\}$. Then we have
\begin{equation}
\rho_{\alpha,n}\geq\frac{4m^2}{n(n-1)\varDelta}
\end{equation}
and
\begin{equation}\label{eq:cor-upp}
\left.\begin{array}{c}
\dfrac{2m\left(2m+\left(n-1\right)\left(\varDelta-\delta\right)\right)}{\delta(n-1)\left(n+\varDelta-\delta\right)}\\[15pt]
\dfrac{2m\varDelta}{(n-1)\delta}
\end{array}\right\} \geq\rho_{\alpha,2}.
\end{equation}
\end{cor}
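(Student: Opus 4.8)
The plan is to obtain all three inequalities from a single elementary observation: for a connected graph the arithmetic mean of the $n-1$ nonzero eigenvalues of $\mathscr{L}_{\alpha}$ equals $\trhb(G)/(n-1)$, and this mean is automatically pinched between the smallest nonzero eigenvalue $\rho_{\alpha,2}$ and the largest eigenvalue $\rho_{\alpha,n}$.

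First I would combine the trace identity with Lemma~\ref{lem:Lemma_trace}. Since the trace of a matrix is the sum of its eigenvalues and $\operatorname{tr}(\mathscr{L}_{\alpha})=\trhb(G)$ for both choices of $\alpha$, we have $\sum_{i=1}^{n}\rho_{\alpha,i}=\trhb(G)$. Here connectivity enters: by Theorem~\ref{thm:real spectrum}.(iii) the rank of $\mathscr{L}_{\alpha}$ equals $n-1$, so precisely one eigenvalue vanishes, namely $\rho_{\alpha,1}$. Consequently $\sum_{i=2}^{n}\rho_{\alpha,i}=\trhb(G)$ is a sum of $n-1$ strictly positive numbers, whose mean is therefore $\trhb(G)/(n-1)$.

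Next I would invoke the trivial bracketing of a mean by its extreme values: the largest of $\rho_{\alpha,2},\dots,\rho_{\alpha,n}$ is at least their mean and the smallest is at most their mean, i.e.
\[
\rho_{\alpha,2}\ \le\ \frac{\trhb(G)}{n-1}\ \le\ \rho_{\alpha,n}.
\]
It then remains only to feed in the trace estimates established earlier. Substituting the lower bound $\trhb(G)\ge 4m^{2}/(n\varDelta)$ from~\eqref{eq:estimate_trace} into the right-hand inequality yields $\rho_{\alpha,n}\ge 4m^{2}/\bigl(n(n-1)\varDelta\bigr)$; substituting the upper bound from~\eqref{eq:estimate_trace} and the coarser upper bound $\trhb(G)\le 2m\varDelta/\delta$ from~\eqref{eq:inequalities_degree} into the left-hand inequality yields the two displayed upper bounds on $\rho_{\alpha,2}$.

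I do not anticipate any genuine obstacle; the computation is routine once the averaging idea is in place. The one subtlety worth stressing is that one must average over the $n-1$ \emph{nonzero} eigenvalues rather than over all $n$ of them, since this is exactly what sharpens the denominator from $n$ to $n-1$ and is the sole place where the standing connectedness assumption (through Theorem~\ref{thm:real spectrum}.(iii)) is used.
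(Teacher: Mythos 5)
Your proposal is correct and follows essentially the same route as the paper: the paper likewise observes that $\frac{1}{n-1}\trhb(G)$ is the arithmetic mean of the $n-1$ nonzero eigenvalues of $\mathscr{L}_{\alpha}$, sandwiches $\rho_{\alpha,2}\le \frac{1}{n-1}\trhb(G)\le\rho_{\alpha,n}$, and then substitutes the trace bounds from \eqref{eq:inequalities_degree} and \eqref{eq:estimate_trace}. Your write-up is in fact slightly more careful than the paper's, since you make explicit where connectedness (via Theorem~\ref{thm:real spectrum}.(iii)) guarantees that exactly one eigenvalue vanishes, a point the paper leaves implicit.
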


We note in passing that on a $\left(k+1\right)$-star, the upper bounds in~\eqref{eq:cor-upp} reduce to
\begin{equation}
\left.\begin{array}{c}
2+k\\
2k+1
\end{array}\right\} \geq\rho_{\alpha,2,}
\end{equation}
which does not prevent {$\rho_{\alpha,2}$}
to tend to $+\infty$ as the graph grows.

We also remark that these estimates are only really interesting for
non-regular graphs, because in regular graphs the hubs-biased Laplacians
coincide with the standard discrete Laplacian for which many different
bounds are known for its eigenvalues.
\begin{proof}
Observe that $\dfrac{1}{n-1}\trhb\left(G\right)$ yields
the arithmetic mean of all non-zero eigenvalues of ${\mathscr{L}}_{\alpha}$;
in particular, the lowest non-zero eigenvalue $\rho_{\alpha,2}$ cannot
be larger than $\dfrac{1}{n-1}\trhb\left(G\right)$
while the largest eigenvalue $\rho_{\alpha,n}$ cannot be larger that
$\dfrac{1}{n-1}\trhb\left(G\right)$. Taking into account
\eqref{eq:inequalities_degree} and \eqref{eq:estimate_trace} we
deduce the claimed estimate.
\end{proof}

\begin{rem}\label{rem:gers}
(1) The naive upper bound
\begin{equation}\label{eq:naiveup}
\rho_{\alpha,n}\leq2\max_{v\in V}\left({\mathscr{L}}_{\alpha}\right)_{vv}
\end{equation}
on the largest eigenvalue of ${\mathscr{L}}_{\alpha}$ follows
from Ger\v{s}gorin's Theorem, since by \eqref{Lkev} $\left({\mathscr{L}}_{\alpha}\right)_{vv}=\sum_{w\in V}\left|\left({\mathscr{L}}_{\alpha}\right)_{vw}\right|$.
Now,
\begin{equation}
\begin{split}
\left({\mathscr{L}}_{-1}\right)_{vv}&=\sum_{w\in\mathcal{N}_{v}}\dfrac{k_{w}}{k_{v}}\leq\dfrac{\varDelta_{\mathcal{N}_{v}}}{k_{v}}\sum_{w\in\mathcal{N}_{v}}1=\varDelta_{\mathcal{N}_{v}}\le \varDelta,\\
\left({\mathscr{L}}_{1}\right)_{vv}&=\sum_{w\in\mathcal{N}_{v}}\dfrac{k_{v}}{k_{w}}\leq\dfrac{k_{v}}{\delta}
\sum_{w\in\mathcal{N}_{v}}1=\dfrac{k_{v}^{2}}{\delta}\leq\dfrac{\varDelta^{2}}{\delta},
\end{split}
\end{equation}
which are both sharp for regular graphs. Plugging these expressions in~\eqref{eq:naiveup} yields
\begin{equation}
\rho_{-1,n}\leq 2\varDelta
\end{equation}
and
\begin{equation}
\rho_{1,n}\leq2\dfrac{\varDelta^{2}}{\delta}.
\end{equation}

(2) In order to show some lower bounds on $\rho_{\alpha,2}$, we remark that the standard discrete
Laplacian $\mathcal{\mathscr{L}}$ and ${\mathscr{L}}_{\alpha}$
share the null space, cf.\ the proof of Theorem~\ref{thm:real spectrum}.(iii). Then, we can quotient it out, {thus studying the lowest eigenvalue of $\mathcal{\mathscr{L}}$ and ${\mathscr{L}}_{\alpha}$
on $\mathbb{C}^{n}/\left\langle {\mathbf 1}\right\rangle$, the space of vectors orthogonal to the vector ${\mathbf 1}=(1,\ldots,1)$}. Taking a normalized vector $f$ in this space one sees that
\begin{equation}
\dfrac{\delta}{\varDelta}\left(\mathscr{L}f,f\right)\leq\left(\mathscr{L}_{\alpha}f,f\right).
\end{equation}

In particular, choosing $f$ to be an eigenfunction associated with
the lowest non-zero eigenvalue $\rho_{\alpha,2}$ of $\mathscr{L}_{\alpha}$
and applying the {Courant--Fischer min-max} characterization of the eigenvalues of the Hermitian matrix we deduce
\begin{equation}
\dfrac{\delta}{\varDelta}\rho_{2}\leq\rho_{\alpha,2},
\end{equation}
where $\rho_{2}$ is the second lowest eigenvalue of the discrete
Laplacian, i.e., the algebraic connectivity \cite{Algebraic connectivity}.
Alternatively, for $\alpha=-1$ we can use
\begin{equation}
\delta\left(\mathscr{L}_{\textnormal{norm}}f,f\right)\leq\left(\mathscr{L}_{\alpha}f,f\right),
\end{equation}
and deduce

\begin{equation}
\delta\rho_{2,\textnormal{norm}}\leq\rho_{\alpha,2},
\end{equation}
where $\mathscr{L}_{\textnormal{norm}}\coloneqq K^{-1/2}\mathscr{L}K^{-1/2}$
is the normalized Laplacian. Now we can either apply explicit formulae
for $\rho_{2}$ and { $\rho_{2,\textnormal{norm}}$
for classes of graphs or use general estimates like}

\begin{equation}
\rho_{2}\geq2\eta\left(1-\cos\dfrac{\pi}{n}\right),
\end{equation}
from \cite{Fiedler} or

\begin{equation}
\rho_{2,\textnormal{norm}}\geq\left\{ \begin{array}{c}
\dfrac{1}{Dn},\\
1-\cos\dfrac{\pi}{m},
\end{array}\right.
\end{equation}
from \cite{Chung,spectral gap}, respectively, where $\eta\geq1$
is the edge connectivity and $D$ is the diameter of $G$.

{
(3) Further sharp bounds on the Zagreb index are known, including~\cite[Theorem 2.3 and Theorem 2.6]{DasZagreb}, immediately yielding
\[
\varDelta+\frac{(2m-\varDelta)^2}{\varDelta(n-1)}+\frac{2(n-2)(\varDelta_2-\delta)^2}{\varDelta(n-1)^2}\le 
\trhb(G) \le 
\frac{(n+1)m-\varDelta(n-\varDelta)}{\delta}+\frac{2(m-\varDelta)^2}{\delta(n-2)},
\]
where $\varDelta_2$ denotes the second largest degree of $G$, the upper bound holding under the additional assumption that $n\ge 3$. (A characterization of the extremal graphs where equality is attained is available, too, but is rather technical.)

Following the same proof as in Corollary~\ref{cor:spectral_bound} finally yields the bounds
\[
\rho_{\alpha,n}\ge \frac{\varDelta}{n-1}+\frac{(2m-\varDelta)^2}{\varDelta(n-1)^2}+\frac{2(n-2)(\varDelta_2-\delta)^2}{\varDelta(n-1)^3}
\]
and
\[
\frac{(n+1)m-\varDelta(n-\varDelta)}{\delta(n-1)}+\frac{2(m-\varDelta)^2}{\delta(n-2)(n-1)}\ge \rho_{\alpha,2}.
\]
}
\end{rem}

\section{Hubs-biased Resistance Distance}

We adapt here some general definitions of resistive networks to the
case of hubs-biased systems, mainly following the classic formulations
given in \cite{resistance_0,resistance_1,resistance_5}. 
Let us
consider $G$ as a resistive network in which every edge $\left(v,w\right)\in E$
has edge resistance $r_{\alpha}\left(v,w\right)\coloneqq c_{\alpha}^{-1}\left(v,w\right)$.

Let us consider the connection of a voltage between the vertices $v$
and $w$, and let $i\left(v,w\right)>0$ be the net current out the
source $v$ and into the sink $w$, such that $i\left(v,w\right)=-i\left(w,v\right).$
Then, according to the first Kirchhoff's law we have that $\sum_{w\in\mathcal{N}_{v}}i\left(v,w\right)=I$
if $v$ is a source, $\sum_{w\in\mathcal{N}_{v}}i\left(v,w\right)=-I$
if $v$ is a sink, or zero otherwise, {where we have denoted by $I$ the net current flowing through the whole network}. The application of the second
Kirchhoff's law, namely that $\sum_{\left(v,w\right)\in C}i\left(v,w\right)r_{\alpha}\left(v,w\right)=0$
where $C$ is a cycle with edges labeled in consecutive order, implies
that a potential $\mathscr{V}$ may be associated with any vertex
$v$, such that for all edges

\begin{equation}
i\left(v,w\right)r_{\alpha}\left(v,w\right)=\mathscr{V}\left(v\right)-\mathscr{V}\left(w\right),
\end{equation}
which represents the Ohm's law, and where $i$ and $\mathscr{V}$
depend on the net current $I$ and on the pair of vertices where the
voltage source has been placed. Let us now define formally the {\textit{hubs-biased effective resistance}}, which is the resistance
of the total system when a voltage source is connected across a corresponding
pair of vertices. Throughout this section we are still adopting the notation in Section~\ref{sec:spectrum}: in particular, $\alpha\in \{-1,1\}$.
\begin{defn}
The \textit{hubs-biased effective resistance} between the vertices $v$
and $w$ of $G$ is

\begin{equation}
\Omega_{\alpha}\left(v,w\right):=\dfrac{\mathscr{V}\left(v\right)-\mathscr{V}\left(w\right)}{I}.
\end{equation}
\end{defn}

We now prove the following result, showing that hubs-biased resistance between any two vertices of $G$ is a squared Euclidean distance.

\begin{lem}
 \label{lem:pseudoinverse}
Let $v,w\in V$. Then for $\alpha\in\{-1,1\}$ the hubs-biased resistance $\Omega_{\alpha}\left(v,w\right)$
is given by
\begin{equation}
\Omega_{\alpha}\left(v,w\right)=\mathscr{L}_{\alpha}^{+}\left(v,v\right)+\mathscr{L}_{\alpha}^{+}\left(w,w\right)-\mathscr{L}_{\alpha}^{+}\left(v,w\right)-\mathscr{L}_{\alpha}^{+}\left(w,v\right),\label{eq:resistance}
\end{equation}
where $\mathscr{L}_{\alpha}^{+}$ stands for the Moore--Penrose pseudoinverse
of $\mathscr{L}_{\alpha}$.
\end{lem}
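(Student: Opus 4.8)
The plan is to translate the circuit description into a single linear system and then invert it with the Moore--Penrose pseudoinverse, exactly as in the classical Klein--Randi\'{c} argument, but keeping track of the fact that $\mathscr{L}_{\alpha}$ is now both singular and non-symmetric. First I would fix the source $v$ and the sink $w$ and record the governing equations. Writing the edge current as $i(x,y)=c_{\alpha}(x,y)\bigl(\mathscr{V}(x)-\mathscr{V}(y)\bigr)$ (Ohm's law, with conductance $c_{\alpha}=r_{\alpha}^{-1}$) and imposing the first Kirchhoff law $\sum_{y\in\mathcal N_x}i(x,y)=I\,(e_v-e_w)(x)$ at every vertex $x$, one reads off from the action~\eqref{Lkev} of $\mathscr{L}_{\alpha}$ on the basis $(e_x)_{x\in V}$ that these balance equations are precisely
\[
\mathscr{L}_{\alpha}\mathscr{V}=I\,(e_v-e_w).
\]
Thus the whole statement reduces to solving this system for $\mathscr{V}$ and extracting $\mathscr{V}(v)-\mathscr{V}(w)=\langle e_v-e_w,\mathscr{V}\rangle$.

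Next I would solve the system through $\mathscr{L}_{\alpha}^{+}$. By Theorem~\ref{thm:real spectrum}.(iii), on a connected graph $\ker\mathscr{L}_{\alpha}$ is one-dimensional, and since every row sum of $\mathscr{L}_{\alpha}$ vanishes it is spanned by $\mathbf 1$. Writing the solution as $\mathscr{V}=I\,\mathscr{L}_{\alpha}^{+}(e_v-e_w)$ up to an element of $\ker\mathscr{L}_{\alpha}=\langle\mathbf 1\rangle$ and substituting into $\Omega_{\alpha}(v,w)=\langle e_v-e_w,\mathscr{V}\rangle/I$, the additive multiple of $\mathbf 1$ drops out because $\langle e_v-e_w,\mathbf 1\rangle=1-1=0$; hence the potential difference is gauge-independent. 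Expanding the remaining bilinear expression $\langle e_v-e_w,\mathscr{L}_{\alpha}^{+}(e_v-e_w)\rangle$ into its four scalar entries $\mathscr{L}_{\alpha}^{+}(v,v)$, $\mathscr{L}_{\alpha}^{+}(w,w)$, $\mathscr{L}_{\alpha}^{+}(v,w)$, $\mathscr{L}_{\alpha}^{+}(w,v)$ then yields exactly~\eqref{eq:resistance}.

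The delicate point, and the step I expect to be the main obstacle, is justifying that $\mathscr{V}=I\,\mathscr{L}_{\alpha}^{+}(e_v-e_w)$ genuinely realizes the Kirchhoff balance rather than merely minimizing the residual. Because $\mathscr{L}_{\alpha}$ is not symmetric, $\mathscr{L}_{\alpha}\mathscr{L}_{\alpha}^{+}$ is the orthogonal projection onto $\operatorname{range}\mathscr{L}_{\alpha}=(\ker\mathscr{L}_{\alpha}^{T})^{\perp}$, which is \emph{not} the same as $(\ker\mathscr{L}_{\alpha})^{\perp}=\langle\mathbf 1\rangle^{\perp}$. Using the factorization $\mathscr{L}_{\alpha}=K^{\alpha}(\Xi_{\alpha}-A)K^{-\alpha}$ from Theorem~\ref{thm:real spectrum} one computes $\ker\mathscr{L}_{\alpha}^{T}=\langle K^{-2\alpha}\mathbf 1\rangle$, so the requirement $\mathscr{L}_{\alpha}\mathscr{L}_{\alpha}^{+}(e_v-e_w)=e_v-e_w$ amounts to the orthogonality $\langle e_v-e_w,K^{-2\alpha}\mathbf 1\rangle=0$. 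Verifying this range/solvability condition (so that the pseudoinverse returns an exact potential, whose substitution reproduces the claimed scalar identity) is the core of the argument; once it is established, the four-term expansion and the gauge-independence above make the lemma immediate.
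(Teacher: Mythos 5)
Your route is the same as the paper's (Ohm plus the first Kirchhoff law give $\mathscr{L}_{\alpha}\mathscr{V}=I(e_v-e_w)$, solve via the Moore--Penrose pseudoinverse, expand the quadratic form into four entries), and your identification of the non-symmetry subtlety is exactly right, including the computation $\ker\mathscr{L}_{\alpha}^{T}=\langle K^{-2\alpha}\mathbf 1\rangle$ obtained from $\mathscr{L}_{\alpha}=K^{\alpha}(\Xi_{\alpha}-A)K^{-\alpha}$ and $\ker(\Xi_{\alpha}-A)=\langle K^{-\alpha}\mathbf 1\rangle$. But the step you defer to the end --- ``verifying this range/solvability condition'' --- is a genuine gap, and it cannot be closed: the condition is false whenever the terminal vertices have different degrees. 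Indeed
\begin{equation*}
\langle e_v-e_w,\,K^{-2\alpha}\mathbf 1\rangle \;=\; k_v^{-2\alpha}-k_w^{-2\alpha},
\end{equation*}
which vanishes if and only if $k_v=k_w$ (for either $\alpha=1$ or $\alpha=-1$). So as soon as $k_v\neq k_w$ we have $e_v-e_w\notin\operatorname{range}\mathscr{L}_{\alpha}=(\ker\mathscr{L}_{\alpha}^{T})^{\perp}$, the Kirchhoff system admits no exact solution, and $I\,\mathscr{L}_{\alpha}^{+}(e_v-e_w)$ is only the minimal-norm least-squares vector. Your plan as written therefore cannot be completed.

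For comparison: the paper's own proof runs into the same wall and simply does not notice it. It asserts that orthogonality of the right-hand side to $\mathbf 1$ allows one to solve the system by applying $\mathscr{L}_{\alpha}^{+}$; that is the solvability criterion for the \emph{symmetric} case, where $\operatorname{range}=\langle\mathbf 1\rangle^{\perp}$, and it is precisely what fails for the quasi-reciprocal, non-symmetric $\mathscr{L}_{\alpha}$. So your diagnosis is sharper than the paper's treatment; what is missing in both arguments is a repair, not a verification. Two honest readings make the lemma rigorous: either take \eqref{eq:resistance} (equivalently $\Omega_{\alpha}(v,w)=(e_v-e_w)^{T}\mathscr{L}_{\alpha}^{+}(e_v-e_w)$) as the \emph{definition} of the hubs-biased effective resistance, with the circuit equations serving only as motivation, or interpret the balance equations in the least-squares sense, noting that the set of least-squares minimizers is $I\,\mathscr{L}_{\alpha}^{+}(e_v-e_w)+\langle\mathbf 1\rangle$, so that $\mathscr{V}(v)-\mathscr{V}(w)$ is well defined and your gauge-independence argument and the four-term expansion go through unchanged. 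Finally, the second half of the paper's proof (the diagonalization $\mathscr{L}_{\alpha}=(KU_{\alpha})\Lambda_{\alpha}(KU_{\alpha})^{-1}$ exhibiting the expression as a squared Euclidean norm) is not needed for the identity \eqref{eq:resistance} as stated, so its absence from your proposal is not a defect.
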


%
\begin{proof}
First, we will prove that 
\begin{equation}
\Omega_{\alpha}\left(v,w\right)=\left({e}_{v}-{e}_{w}\right)^{T}\mathscr{L}_{\alpha}^{+}\left(v,v\right)\left({e}_{v}-{e}_{w}\right),
\end{equation}
where ${e}_{v}$ is the vector with all entries equal to zero
except the one corresponding to vertex $v$ which is equal to one.
Using {the second Kirchhoff's law we have}

\begin{equation}
\sum_{w\in\mathcal{N}_{v}}\dfrac{1}{r\left(v,w\right)}\left(\mathscr{V}\left(v\right)-\mathscr{V}\left(w\right)\right)=
\begin{cases}
I \qquad&\hbox{if $v$ is a source},\\
-I \qquad&\hbox{if $v$ is a sink},\\
0 & \hbox{otherwise},
\end{cases}
\end{equation}
which can also be written as
\begin{equation}
c_{\alpha}\left(v\right)\mathscr{V}\left(v\right)-\sum_{w=1}^{n}\dfrac{1}{r\left(v,w\right)}\mathscr{V}\left(w\right)=\begin{cases}
I \qquad&\hbox{if $v$ is a source},\\
-I \qquad&\hbox{if $v$ is a sink},\\
0 & \hbox{otherwise}.
\end{cases}
\end{equation}

Let us write it in matrix-vector form as
\begin{equation}
\mathscr{\mathscr{L}_{\alpha}{V}}=I\left({e}_{v}-{e}_{w}\right).\label{eq:Laplcian equation}
\end{equation}

Due to the fact that the right-hand side of \eqref{eq:Laplcian equation} is orthogonal
to ${1}$ we can obtain ${\mathscr{V}}$ as

\begin{equation}
\mathscr{V}\left(v\right)-\mathscr{V}\left(w\right)=\left({e}_{v}-{e}_{w}\right)^{T}\mathscr{{V}}=I\left({e}_{v}-{e}_{w}\right)^{T}\mathscr{L}_{\alpha}^{+}\left({e}_{v}-{e}_{w}\right).
\end{equation}

Then, using the definition of the effective resistance we have
\begin{equation}
\Omega_{\alpha}\left(v,w\right)=\dfrac{\mathscr{V}\left(v\right)-\mathscr{V}\left(w\right)}{I}=\left({e}_{v}-{e}_{w}\right)^{T}\mathscr{L}_{\alpha}^{+}\left({e}_{v}-{e}_{w}\right).
\end{equation}
Now, because $\left({e}_{v}-{e}_{w}\right)^{T}\mathscr{L}_{\alpha}^{+}\left({e}_{v}-{e}_{w}\right)=\mathscr{L}_{\alpha}^{+}\left(v,v\right)+\mathscr{L}_{\alpha}^{+}\left(w,w\right)-\mathscr{L}_{\alpha}^{+}\left(v,w\right)-\mathscr{L}_{\alpha}^{+}\left(w,v\right)$
we only remain to prove that it is a distance. {Let
$\mathscr{L}_{\alpha}=V_{\alpha}\Lambda_{\alpha}V_{\alpha}^{-1}$,
where }$V_{\alpha}=KU_{\alpha}.$
Then
$\mathscr{L}_{\alpha}^{+}=V_{\alpha}\Lambda_{\alpha}^{+}V_{\alpha}^{-1},$
with $\Lambda_{\alpha}^{+}$ being the
Moore--Penrose pseudoinverse of the diagonal matrix of eigenvalues
of$\mathscr{L}_{\alpha},$ i.e., the diagonal matrix whose $i$th
entry is
\[
\Lambda_{\alpha}^{+}\left(i,i\right)=
\begin{cases}
0\quad &\hbox{if the $i$th eigenvalue is $0$},\\
\rho_{\alpha,i}^{-1}\quad &\hbox{if the $i$th eigenvalue is $\ne 0$}.
\end{cases}
\]

Let us write the right-hand side of \eqref{eq:resistance} as
\begin{equation}
{v}_{\alpha}\Lambda_{\alpha}^{+}{u}_{\alpha}^{T}+{w}_{\alpha}\Lambda_{\alpha}^{+}{w}_{\alpha}^{T}-{v}_{\alpha}\Lambda_{\alpha}^{+}{w}_{\alpha}^{T}-{w}_{\alpha}\Lambda_{\alpha}^{+}{v}_{\alpha}^{T},
\end{equation}
where ${v}$ and ${w}$ are the corresponding rows of $V_{\alpha}$
for the vertices $u$ and $w$, respectively. Then, we have

\begin{equation}
\begin{split} & \mathscr{L}_{\alpha}^{+}\left(u,u\right)+\mathscr{L}_{\alpha}^{+}\left(w,w\right)-\mathscr{L}_{\alpha}^{+}\left(u,w\right)-\mathscr{L}_{\alpha}^{+}\left(w,u\right)\\
 & ={v}_{\alpha}\left(\Lambda_{\alpha}^{+}{v}_{\alpha}^{T}-\Lambda_{\alpha}^{+}{w}_{\alpha}^{T}\right)-{w}_{\alpha}\left(\Lambda_{\alpha}^{+}{v}_{\alpha}^{T}-\Lambda_{\alpha}^{+}{w}_{\alpha}^{T}\right),\\
 & =\left({v}_{\alpha}-{w}_{\alpha}\right)\left(\Lambda_{\alpha}^{+}{v}_{\alpha}^{T}-\Lambda_{\alpha}^{+}{w}_{\alpha}^{T}\right)\\
 & =\left({v}_{\alpha}-{w}_{\alpha}\right)\Lambda_{\alpha}^{+}\left({v}_{\alpha}-{w}_{\alpha}\right)^{T}\\
 & =\left(\left({v}_{\alpha}-{w}_{\alpha}\right)\sqrt{\Lambda_{\alpha}^{+}}\right)\left(\left({v}_{\alpha}-{w}_{\alpha}\right)\sqrt{\Lambda_{\alpha}^{+}}\right)^{T}\\
 & =\left(\mathscr{V}_{\alpha}\left(v\right)-\mathscr{V}_{\alpha}\left(w\right)\right)^{T}\left(\mathscr{V}_{\alpha}\left(v\right)-\mathscr{V}_{\alpha}\left(w\right)\right)\\
 & =\left\Vert \mathscr{V}_{\alpha}\left(v\right)-\mathscr{V}_{\alpha}\left(w\right)\right\Vert ^{2},
\end{split}
\end{equation}
where $\mathscr{V}_{\alpha}\left(v\right)={v}_{\alpha}\sqrt{\Lambda_{\alpha}^{+}}$
is the position vector of the vertex $v$ in the Euclidean space induced
by the hubs-repelling Laplacian.
\end{proof}

\begin{cor}\label{cor:corollary_omega}
Let $\alpha\in\{-1,1\}$ and  $\mathscr{L}_{\alpha}=V_{\alpha}\Lambda_{\alpha}V_{\alpha}^{-1}$,
where 
 $V_{\alpha}:=KU_{\alpha}$ for $V_{\alpha}:=\left[{\psi}_{\alpha,1},{\psi}_{\alpha,2},\cdots,{\psi}_{\alpha n}\right]$
and $\Lambda_{\alpha}:=\diag \left(\rho_{\alpha,k}\right)$.
Then,
\begin{equation}
\Omega_{\alpha}\left(u,w\right)=\sum_{k=2}^{n}\rho_{\alpha,k}^{-1}\left(\psi_{\alpha,k,u}-\psi_{\alpha,k,w}\right)^{2}.
\end{equation}

\end{cor}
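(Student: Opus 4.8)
The plan is to read off the claim directly from the computation already carried out in the proof of Lemma~\ref{lem:pseudoinverse}. That proof establishes, as its penultimate identity, that
\[
\Omega_{\alpha}(u,w)=\left(v_{\alpha}-w_{\alpha}\right)\Lambda_{\alpha}^{+}\left(v_{\alpha}-w_{\alpha}\right)^{T},
\]
where $v_{\alpha}$ and $w_{\alpha}$ denote the rows of $V_{\alpha}=KU_{\alpha}$ indexed by the vertices $u$ and $w$, respectively, and $\Lambda_{\alpha}^{+}$ is the Moore--Penrose pseudoinverse of the diagonal eigenvalue matrix $\Lambda_{\alpha}=\diag(\rho_{\alpha,k})$. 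So the whole task is to expand this diagonal quadratic form coordinatewise.

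First I would fix the bookkeeping between the two ways of slicing $V_{\alpha}$. Writing $V_{\alpha}=[\psi_{\alpha,1},\ldots,\psi_{\alpha,n}]$ by columns, the $k$-th column $\psi_{\alpha,k}$ is the eigenvector of $\mathscr{L}_{\alpha}$ associated with $\rho_{\alpha,k}$ (via Theorem~\ref{thm:real spectrum}.(iv)), and its value at vertex $u$ is the scalar $\psi_{\alpha,k,u}=(V_{\alpha})_{uk}$. Consequently the row $v_{\alpha}$ has $k$-th entry $\psi_{\alpha,k,u}$, and likewise $w_{\alpha}$ has $k$-th entry $\psi_{\alpha,k,w}$, so that $(v_{\alpha}-w_{\alpha})_{k}=\psi_{\alpha,k,u}-\psi_{\alpha,k,w}$.

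Since $\Lambda_{\alpha}^{+}$ is diagonal, the quadratic form splits into a single sum, and I would write
\[
\Omega_{\alpha}(u,w)=\sum_{k=1}^{n}\Lambda_{\alpha}^{+}(k,k)\,\bigl(\psi_{\alpha,k,u}-\psi_{\alpha,k,w}\bigr)^{2}.
\]
Finally I would invoke the spectral information from Theorem~\ref{thm:real spectrum}: for the connected graph $G$ the kernel of $\mathscr{L}_{\alpha}$ is one-dimensional (Theorem~\ref{thm:real spectrum}.(iii) with $\mathscr{C}=1$), so in the ascending ordering exactly the lowest eigenvalue vanishes, $\rho_{\alpha,1}=0$, while $\rho_{\alpha,k}>0$ for $k\ge 2$. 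By the definition of the pseudoinverse of $\Lambda_{\alpha}$ this forces $\Lambda_{\alpha}^{+}(1,1)=0$ and $\Lambda_{\alpha}^{+}(k,k)=\rho_{\alpha,k}^{-1}$ for $k\ge 2$, so the $k=1$ term drops out and the sum collapses to the asserted expression.

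The step requiring the most care is not computational but is the matching of indices just described, together with the justification that the $k=1$ term really vanishes; the latter relies on connectedness, which guarantees a single zero eigenvalue and is precisely why the summation in the statement starts at $k=2$. Beyond that, everything is an immediate consequence of the diagonalization recorded in Lemma~\ref{lem:pseudoinverse} and Theorem~\ref{thm:real spectrum}.(iv).
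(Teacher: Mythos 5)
Your proof is correct and takes essentially the same route as the paper's: both read the claim off Lemma~\ref{lem:pseudoinverse} and expand the diagonal quadratic form $\left(v_{\alpha}-w_{\alpha}\right)\Lambda_{\alpha}^{+}\left(v_{\alpha}-w_{\alpha}\right)^{T}$ coordinatewise, the paper merely writing out the four pseudoinverse entries as spectral sums first and then recombining them into a square, while you expand the factored form directly. Your explicit remarks on the row/column bookkeeping and on why the $k=1$ term vanishes (connectedness forces a one-dimensional kernel, so $\Lambda_{\alpha}^{+}(1,1)=0$) only make precise what the paper leaves implicit by starting its sums at $k=2$.
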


\begin{proof}
It is easy to see from Lemma~\ref{lem:pseudoinverse} that

\begin{equation}
\begin{split} 
\Omega_{\alpha}\left(u,w\right)
 & =\sum_{k=2}^{n}\rho_{\alpha,k}^{-1}\psi_{\alpha,k,u}^{2}+\sum_{k=2}^{n}\rho_{\alpha,k}^{-1}\psi_{\alpha,k,w}^{2}-2\sum_{k=2}^{n}\rho_{\alpha,k}^{-1}\psi_{\alpha,k,u}\psi_{\alpha,k,w}\\
 & =\sum_{k=2}^{n}\rho_{\alpha,k}^{-1}\left(\psi_{\alpha,k,u}^{2}+\psi_{\alpha,k,w}^{2}-2\psi_{\alpha,k,u}\psi_{\left\{ R,A\right\} ,k,w}\right)\\
 & =\sum_{k=2}^{n}\rho_{\alpha,k}^{-1}\left(\psi_{\alpha,k,u}-\psi_{\alpha,k,w}\right)^{2}.
\end{split}
\end{equation}
\end{proof}

\begin{cor}
Let $\alpha=\{-1,1\}$ and $0=\rho_{\alpha,1}<\rho_{\alpha,2}\leq\cdots\leq\rho_{\alpha,n}$
be the eigenvalues of $\mathscr{L}_{\alpha}$. Then,

\begin{equation}
\dfrac{2}{\rho_{\alpha,n}}\leq\Omega_{\alpha}\left(u,w\right)\leq\dfrac{2}{\rho_{\alpha,2}}.
\end{equation}
\end{cor}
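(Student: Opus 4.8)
The plan is to read the estimate directly off the spectral representation established in Corollary~\ref{cor:corollary_omega}, namely
\[
\Omega_{\alpha}\left(u,w\right)=\sum_{k=2}^{n}\rho_{\alpha,k}^{-1}\left(\psi_{\alpha,k,u}-\psi_{\alpha,k,w}\right)^{2},
\]
and to sandwich each coefficient $\rho_{\alpha,k}^{-1}$ between its extreme values. Since the nonzero eigenvalues satisfy $0<\rho_{\alpha,2}\le\rho_{\alpha,k}\le\rho_{\alpha,n}$ for every $k\in\{2,\ldots,n\}$, inverting reverses the ordering and yields $\rho_{\alpha,n}^{-1}\le\rho_{\alpha,k}^{-1}\le\rho_{\alpha,2}^{-1}$. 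Because each summand $\left(\psi_{\alpha,k,u}-\psi_{\alpha,k,w}\right)^{2}$ is nonnegative, I can pull these two constants out of the sum to obtain
\[
\rho_{\alpha,n}^{-1}\,S\;\le\;\Omega_{\alpha}\left(u,w\right)\;\le\;\rho_{\alpha,2}^{-1}\,S,\qquad S:=\sum_{k=2}^{n}\left(\psi_{\alpha,k,u}-\psi_{\alpha,k,w}\right)^{2}.
\]
Everything then reduces to evaluating the normalization constant $S$, which I claim equals $2$.

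To compute $S$ I would use the orthonormality of the eigenbasis underlying Corollary~\ref{cor:corollary_omega}, so that $\sum_{k=1}^{n}\psi_{\alpha,k}\psi_{\alpha,k}^{T}=I_{n}$. Moreover, by Theorem~\ref{thm:real spectrum}.(iii) together with the standing connectedness assumption, the eigenvector $\psi_{\alpha,1}$ belonging to $\rho_{\alpha,1}=0$ is proportional to the all-ones vector $1_{n}$; after normalization $\psi_{\alpha,1}=\tfrac{1}{\sqrt{n}}1_{n}$ and hence $\psi_{\alpha,1}\psi_{\alpha,1}^{T}=\tfrac{1}{n}J_{n}$. Consequently the projection onto the orthogonal complement of the kernel is $\sum_{k=2}^{n}\psi_{\alpha,k}\psi_{\alpha,k}^{T}=I_{n}-\tfrac{1}{n}J_{n}$, and since $\left(e_{u}-e_{w}\right)^{T}1_{n}=0$ I get
\[
S=\left(e_{u}-e_{w}\right)^{T}\Bigl(I_{n}-\tfrac{1}{n}J_{n}\Bigr)\left(e_{u}-e_{w}\right)=\left\Vert e_{u}-e_{w}\right\Vert ^{2}=2.
\]
Substituting $S=2$ into the sandwich gives exactly $\tfrac{2}{\rho_{\alpha,n}}\le\Omega_{\alpha}\left(u,w\right)\le\tfrac{2}{\rho_{\alpha,2}}$, as claimed.

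The monotonicity/sandwiching step is routine; the delicate point, which I would treat most carefully, is the identity $S=2$. Its validity rests on the vectors $\psi_{\alpha,k}$ forming an orthonormal system and on the zero-eigenvalue eigenvector being the rescaled constant vector, exactly the structure exploited in the Euclidean-embedding argument of Lemma~\ref{lem:pseudoinverse}. Since $\mathscr{L}_{\alpha}$ is \emph{not} symmetric, I would be explicit about which basis realizes this orthonormality, reducing to the orthonormal eigenbasis $U_{\alpha}$ of the similar symmetric matrix $\Xi_{\alpha}-A$ furnished by Theorem~\ref{thm:real spectrum}.(iv), and I would verify that the $k=1$ term genuinely drops out so that starting the summation at $k=2$ leaves $S$ unchanged. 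Once that normalization is pinned down the two inequalities follow immediately, and one sees that they are attained simultaneously for all vertex pairs only in highly symmetric situations.
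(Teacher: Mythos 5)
Your proposal follows essentially the same route as the paper's own proof: the paper likewise starts from Corollary~\ref{cor:corollary_omega}, bounds each coefficient $\rho_{\alpha,k}^{-1}$ between $\rho_{\alpha,n}^{-1}$ and $\rho_{\alpha,2}^{-1}$, and obtains the constant $2$ from the normalization relations $\sum_{k=1}^{n}\psi_{\alpha,k,u}^{2}=1$ and $\sum_{k=1}^{n}\psi_{\alpha,k,u}\psi_{\alpha,k,w}=0$ for $u\neq w$, which is precisely what you encode in the projection identity $\sum_{k=2}^{n}\psi_{\alpha,k}\psi_{\alpha,k}^{T}=I_{n}-\tfrac{1}{n}J_{n}$ combined with $\psi_{\alpha,1}\propto 1_{n}$. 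You are in fact more explicit than the paper about the delicate point you flag (which basis realizes the orthonormality, given that $\mathscr{L}_{\alpha}$ is not symmetric); be aware, though, that this point is genuinely thorny for both proofs, since the eigenvector matrix $V_{\alpha}=KU_{\alpha}$ of $\mathscr{L}_{\alpha}$ satisfies $V_{\alpha}V_{\alpha}^{T}=K^{2}\neq I_{n}$, while the orthonormal eigenbasis $U_{\alpha}$ of $\Xi_{\alpha}-A$ has kernel vector proportional to $K^{-\alpha}1_{n}$ rather than $1_{n}$ -- so orthonormality and a constant kernel vector cannot hold simultaneously unless $G$ is regular, a gap your argument shares with, rather than adds to, the paper's.
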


\begin{proof}
\textcolor{black}{Using Corollary \ref{cor:corollary_omega} and the
fact that $\rho_{\alpha,2}$ is the smallest eigenvalue of $\mathscr{L}_{\alpha}$
we have the upper bound if all the eigenvalues are equal to $\rho_{\alpha,2}$.
The result follows from the fact that }$\sum_{k=1}^{n}\psi_{\alpha,k,u}^{2}=1$
and $\sum_{k=1}^{n}\psi_{\alpha,k,u}\psi_{\alpha,k,w}=0$ for every
$u\neq w$. \textcolor{black}{The lower bound is obtained similarly
by the fact that $\rho_{\alpha,n}$ is the largest eigenvalue of $\mathscr{L}_{\alpha}$.}
\end{proof}

\subsection{Hubs-biased Kirchhoff index}

In full analogy with the definition of the so-called Kirchhoff index
by Klein and Randi\'{c} \cite{resistance_1} we define here the hubs-biased
Kirchhoff indices of a graph.
\begin{defn}
Let $\alpha\in\{-1,1\}$. The total hubs-biased resistance distance, or \textit{hubs-biased Kirchhoff index}, of a graph $G$ is defined as

\begin{equation}
\mathcal{\mathscr{R}}_{\alpha}\left(G\right)=\sum_{v<w}\Omega_{\alpha}\left(v,w\right).
\end{equation}
\end{defn}

\begin{lem}\label{lem:Lemma_Kirchhoff}
Let $\alpha\in\{-1,1\}$ and $0=\rho_{\alpha,1}<\rho_{\alpha,2}\leq\cdots\leq\rho_{\alpha,n}$
be the eigenvalues of $\mathscr{L}_{\alpha}$. Then,
\begin{equation}
\mathcal{\mathscr{R}}_{\alpha}\left(G\right)=n\sum_{k=2}^{n}\dfrac{1}{\rho_{\alpha,k}}.
\end{equation}
\end{lem}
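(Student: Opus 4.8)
The plan is to turn the sum defining the Kirchhoff index into a trace computation for the pseudoinverse $\mathscr{L}_\alpha^+=V_\alpha\Lambda_\alpha^+V_\alpha^{-1}$ introduced in the proof of Lemma~\ref{lem:pseudoinverse}, and then to show that the only term which is not manifestly spectral actually vanishes. First I would symmetrize the index: since the representation~\eqref{eq:resistance} shows that $\Omega_\alpha(v,w)=\Omega_\alpha(w,v)$ and that $\Omega_\alpha(v,v)=0$, I may write
\[
\mathscr{R}_\alpha(G)=\sum_{v<w}\Omega_\alpha(v,w)=\frac12\sum_{v,w\in V}\Omega_\alpha(v,w).
\]

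Next I would insert~\eqref{eq:resistance} into this double sum and split it into its four constituent pieces. Summing $\mathscr{L}_\alpha^+(v,v)$ over the free index $w$ (and symmetrically $\mathscr{L}_\alpha^+(w,w)$ over $v$) contributes $2n\operatorname{tr}(\mathscr{L}_\alpha^+)$, while each of the two mixed terms sums to $\mathbf 1^T\mathscr{L}_\alpha^+\mathbf 1$; the upshot is
\[
\mathscr{R}_\alpha(G)=n\operatorname{tr}\bigl(\mathscr{L}_\alpha^+\bigr)-\mathbf 1^T\mathscr{L}_\alpha^+\mathbf 1.
\]
The trace is then read off by cyclic invariance together with the diagonalization in Theorem~\ref{thm:real spectrum}.(iv): $\operatorname{tr}(\mathscr{L}_\alpha^+)=\operatorname{tr}(V_\alpha\Lambda_\alpha^+V_\alpha^{-1})=\operatorname{tr}(\Lambda_\alpha^+)=\sum_{k=2}^n\rho_{\alpha,k}^{-1}$, the index $k=1$ dropping out because $\rho_{\alpha,1}=0$ forces a zero entry in $\Lambda_\alpha^+$.

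The main obstacle is the cross term $\mathbf 1^T\mathscr{L}_\alpha^+\mathbf 1$, and here the non-symmetry of $\mathscr{L}_\alpha$ must be handled with care: for a non-symmetric matrix the kernels of $\mathscr{L}_\alpha^+$ and of its transpose need not coincide, so one cannot simply quote ``$\mathscr{L}_\alpha\mathbf 1=0\Rightarrow\mathscr{L}_\alpha^+\mathbf 1=0$'' by symmetry. The clean way around this is to argue spectrally: the constant vector $\mathbf 1$ is the eigenvector of $\mathscr{L}_\alpha$ associated with $\rho_{\alpha,1}=0$ — indeed $\mathscr{L}_\alpha\mathbf 1=0$, since the defining sum in~\eqref{eq:path_Laplacian-1} vanishes identically on constant functions — so (for connected $G$) $\mathbf 1$ is, up to a scalar, the first column $\psi_{\alpha,1}$ of $V_\alpha$. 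Consequently $V_\alpha^{-1}\mathbf 1$ is a multiple of the first coordinate vector, $\Lambda_\alpha^+$ annihilates it because its first diagonal entry is $0$, and therefore $\mathscr{L}_\alpha^+\mathbf 1=V_\alpha\Lambda_\alpha^+V_\alpha^{-1}\mathbf 1=0$; in particular $\mathbf 1^T\mathscr{L}_\alpha^+\mathbf 1=0$. Substituting the two displays above then yields $\mathscr{R}_\alpha(G)=n\sum_{k=2}^n\rho_{\alpha,k}^{-1}$, as claimed. I prefer this trace route precisely because it sidesteps any appeal to column-orthonormality of $V_\alpha$ (which need not hold, as $V_\alpha$ is in general not an orthogonal matrix) and localizes the whole argument in the single identity $\mathscr{L}_\alpha^+\mathbf 1=0$.
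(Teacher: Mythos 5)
Your proof is correct and follows essentially the same route as the paper's: symmetrize the Kirchhoff sum, insert the pseudoinverse representation of $\Omega_{\alpha}$ from Lemma~\ref{lem:pseudoinverse}, and reduce everything to $n\operatorname{tr}\left(\mathscr{L}_{\alpha}^{+}\right)$ evaluated spectrally via the diagonalization. If anything, you are more careful than the paper, whose displayed computation silently drops the cross terms $\mathbf{1}^{T}\mathscr{L}_{\alpha}^{+}\mathbf{1}$ and $\mathbf{1}^{T}\left(\mathscr{L}_{\alpha}^{+}\right)^{T}\mathbf{1}$; your spectral argument that $\mathscr{L}_{\alpha}^{+}\mathbf{1}=0$ (valid under the paper's convention $\mathscr{L}_{\alpha}^{+}=V_{\alpha}\Lambda_{\alpha}^{+}V_{\alpha}^{-1}$, together with connectedness, which makes the kernel one-dimensional and spanned by $\mathbf{1}$) supplies exactly the justification that step needs for a non-symmetric $\mathscr{L}_{\alpha}$.
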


\begin{proof}
Let us write the sum of the hubs-biased resistance
distances as
\begin{equation*}
\begin{split}
\dfrac{1}{2}\sum_{v,w\in V}\Omega_{\alpha}\left(u,w\right) & =\dfrac{1}{2}\left({1}^{T}\textnormal{diag\ensuremath{\left(\mathscr{L}_{\alpha\ }^{+}\right){1}^{T}}}{1}+{1}^{T}{1}\left(\diag\left(\mathscr{L}_{\alpha}^{+}\right)\right)^{T}{1}-{1}^{T}\mathscr{L}_{\alpha}^{+}{1}-{1}^{T}\left(\mathscr{L}_{\alpha}^{+}\right)^{T}{1}\right)\\
 & =\dfrac{1}{2}\left(2n\textnormal{tr}\left(\mathscr{L}_{\alpha}^{+}\right)\right)\\
 & =n\sum_{k=2}^{n}\dfrac{1}{\rho_{\alpha,k}},
\end{split}
\end{equation*}
where $\textnormal{tr}\left(\mathscr{L}_{\alpha}^{+}\right)$ is
the trace of $\mathscr{L}_{\alpha}^{+}$.
\end{proof}
\begin{cor}\label{cor:bound RR}
Let $\alpha\in\{-1,1\}$ and $0=\rho_{\alpha,1}<\rho_{\alpha,2}\leq\cdots\leq\rho_{\alpha,n}$
be the eigenvalues of $\mathscr{L}_{\alpha}$ for $G$ with $n\geq2$.
Then,
\begin{equation}
\dfrac{n\left(n-1\right)}{\rho_{\alpha,n}}\mathcal{\leq\mathcal{\mathscr{R}}_{\mathnormal{\alpha}}}\left(G\right)\leq\dfrac{n\left(n-1\right)}{\rho_{\alpha,2}}.
\end{equation}
\end{cor}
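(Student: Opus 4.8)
The plan is to reduce everything to Lemma~\ref{lem:Lemma_Kirchhoff}, which already expresses the hubs-biased Kirchhoff index in closed spectral form as
\[
\mathscr{R}_{\alpha}\left(G\right)=n\sum_{k=2}^{n}\dfrac{1}{\rho_{\alpha,k}}.
\]
Once this identity is in hand, the corollary is purely a matter of estimating each of the $n-1$ reciprocals $\rho_{\alpha,k}^{-1}$, for $k=2,\dots,n$, from above and below by replacing $\rho_{\alpha,k}$ with the extreme nonzero eigenvalues. Since $G$ is connected we have $\rho_{\alpha,1}=0$ and $0<\rho_{\alpha,2}\le\rho_{\alpha,3}\le\cdots\le\rho_{\alpha,n}$, so every summand is a well-defined positive number and the two-sided comparison below makes sense.

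First I would observe that monotonicity of the eigenvalue sequence yields, for each index $k\in\{2,\dots,n\}$, the pointwise bounds $\rho_{\alpha,2}\le\rho_{\alpha,k}\le\rho_{\alpha,n}$, which upon inverting become
\[
\dfrac{1}{\rho_{\alpha,n}}\le\dfrac{1}{\rho_{\alpha,k}}\le\dfrac{1}{\rho_{\alpha,2}}.
\]
Summing these inequalities over $k=2,\dots,n$ --- a sum of exactly $n-1$ terms --- gives
\[
\dfrac{n-1}{\rho_{\alpha,n}}\le\sum_{k=2}^{n}\dfrac{1}{\rho_{\alpha,k}}\le\dfrac{n-1}{\rho_{\alpha,2}}.
\]

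Finally, multiplying through by the factor $n>0$ and invoking the identity of Lemma~\ref{lem:Lemma_Kirchhoff} produces exactly
\[
\dfrac{n\left(n-1\right)}{\rho_{\alpha,n}}\le\mathscr{R}_{\alpha}\left(G\right)\le\dfrac{n\left(n-1\right)}{\rho_{\alpha,2}},
\]
as claimed. I do not anticipate any genuine obstacle: the entire argument is a direct application of the spectral formula together with the elementary fact that a sum of quantities each trapped between two bounds is itself trapped between $(n-1)$ times those bounds. The only point that must be stated carefully is the connectivity hypothesis, which (via Theorem~\ref{thm:real spectrum}.(iii), so that $\mathscr{C}=1$) guarantees $\rho_{\alpha,2}>0$ and hence that the denominators are nonzero and all the reciprocals are meaningful.
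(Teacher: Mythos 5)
Your proof is correct and matches the paper's intended argument: the paper states this corollary without proof immediately after Lemma~\ref{lem:Lemma_Kirchhoff}, precisely because the bound follows by replacing each $\rho_{\alpha,k}$ in the spectral formula $\mathscr{R}_{\alpha}(G)=n\sum_{k=2}^{n}\rho_{\alpha,k}^{-1}$ by the extreme nonzero eigenvalues, exactly as you do. Your additional remark that the hypothesis $0=\rho_{\alpha,1}<\rho_{\alpha,2}$ (i.e.\ connectivity, via Theorem~\ref{thm:real spectrum}.(iii)) is what makes all denominators positive is a correct and welcome clarification.
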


\medskip{}

\begin{lem}
Let $\alpha\in\{-1,1\}$. Then
\begin{equation}
\mathcal{\mathcal{\mathscr{R}}_{\mathnormal{\alpha}}}\left(G\right)\geq n-1,
\end{equation}
with equality if and only if $G=K_{n}$.
\end{lem}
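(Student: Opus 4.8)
The plan is to combine the spectral formula of Lemma~\ref{lem:Lemma_Kirchhoff} with an elementary convexity inequality and with a sharp upper bound on the hubs-biased trace. Under the standing connectivity assumption there is exactly one vanishing eigenvalue, so $\rho_{\alpha,2},\ldots,\rho_{\alpha,n}$ are $n-1$ strictly positive reals. By Lemma~\ref{lem:Lemma_Kirchhoff} we have $\mathscr{R}_{\alpha}(G)=n\sum_{k=2}^{n}\rho_{\alpha,k}^{-1}$, while by Lemma~\ref{lem:Lemma_trace} the sum of these nonzero eigenvalues equals the trace,
\[
\sum_{k=2}^{n}\rho_{\alpha,k}=\textnormal{tr}(\mathscr{L}_{\alpha})=\trhb(G).
\]

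First I would apply the AM--HM inequality (equivalently Cauchy--Schwarz) to the $n-1$ positive numbers $\rho_{\alpha,k}$, obtaining
\[
\sum_{k=2}^{n}\frac{1}{\rho_{\alpha,k}}\ge\frac{(n-1)^{2}}{\sum_{k=2}^{n}\rho_{\alpha,k}}=\frac{(n-1)^{2}}{\trhb(G)},
\]
so that $\mathscr{R}_{\alpha}(G)\ge \frac{n(n-1)^{2}}{\trhb(G)}$. This mirrors the classical derivation of $\mathscr{R}(G)\ge n-1$, except that the role there played by $2m$ is now played by $\trhb(G)$.

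The key step, and the one I expect to be the main obstacle, is the sharp upper bound $\trhb(G)\le n(n-1)$. For this I would use the rewriting~\eqref{eq:general_trace_2}, namely $\trhb(G)=\sum_{v\in V}\frac{1}{k_{v}}\sum_{w\in\mathcal N_{v}}k_{w}$, together with the trivial observation that every vertex of a simple graph on $n$ vertices has degree $k_{w}\le n-1$. Since $\mathcal N_{v}$ contains exactly $k_{v}$ vertices, this gives $\sum_{w\in\mathcal N_{v}}k_{w}\le k_{v}(n-1)$, hence $\frac{1}{k_{v}}\sum_{w\in\mathcal N_{v}}k_{w}\le n-1$, and summing over $v\in V$ yields $\trhb(G)\le n(n-1)$. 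Substituting into the previous estimate produces the claimed bound $\mathscr{R}_{\alpha}(G)\ge n-1$.

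For the equality case I would track both inequalities. If $\mathscr{R}_{\alpha}(G)=n-1$, the chain $n-1=\mathscr{R}_{\alpha}(G)\ge \frac{n(n-1)^{2}}{\trhb(G)}\ge n-1$ forces $\trhb(G)=n(n-1)$, which in turn forces $\sum_{w\in\mathcal N_{v}}k_{w}=k_{v}(n-1)$ for every $v$, i.e.\ every neighbor of every vertex has degree $n-1$; since no vertex is isolated, each vertex is a neighbor of some other vertex, so all degrees equal $n-1$ and $G=K_{n}$. Conversely, on $K_{n}$ all degrees coincide, so by Definition~\ref{def:laplacalpha} the conductances $c_{\alpha}(v,w)$ are all equal to $1$ and $\mathscr{L}_{\alpha}$ reduces to the standard Laplacian, whose nonzero eigenvalues are all equal to $n$; thus $\mathscr{R}_{\alpha}(K_{n})=n\cdot\frac{n-1}{n}=n-1$, completing the equivalence.
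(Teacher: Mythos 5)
Your proof is correct, and it takes a genuinely different route from the paper's. The paper argues variationally on the spectrum: it shows that $\tilde{\mathscr{L}}_{\alpha}:=\mathscr{L}_{\alpha}-\rho_{\alpha,2}\left(I_{n}-n^{-1}J_{n}\right)$ is positive semidefinite, reads off its diagonal to obtain $\rho_{\alpha,2}\leq\frac{n}{n-1}\min_{v}\mathscr{L}_{\alpha}\left(v,v\right)\leq n$, and then appeals to Corollary~\ref{cor:bound RR}, asserting the equality case for $K_{n}$ without further analysis. You instead combine the spectral formula of Lemma~\ref{lem:Lemma_Kirchhoff} with the trace identity of Lemma~\ref{lem:Lemma_trace}, apply AM--HM to the $n-1$ nonzero eigenvalues to get $\mathscr{R}_{\alpha}\left(G\right)\geq n\left(n-1\right)^{2}/\trhb\left(G\right)$, and close with the sharp bound $\trhb\left(G\right)\leq n\left(n-1\right)$ coming from \eqref{eq:general_trace_2} and $k_{w}\leq n-1$. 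Your route buys two things. First, rigor: the paper's last step is actually questionable, because Corollary~\ref{cor:bound RR} bounds $\mathscr{R}_{\alpha}\left(G\right)$ from below only by $n\left(n-1\right)/\rho_{\alpha,n}$, so knowing $\rho_{\alpha,2}\leq n$ does not by itself yield $\mathscr{R}_{\alpha}\left(G\right)\geq n-1$; and a bound $\rho_{\alpha,n}\leq n$ is false in general (for the star $K_{1,n-1}$ one has $\rho_{1,n}\geq\left(n-1\right)^{2}$, since the diagonal entry of the similar symmetric matrix $\Xi_{1}-A$ at the hub already equals $\left(n-1\right)^{2}$). Your chain of inequalities avoids this issue entirely. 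Second, a complete equality analysis: equality forces $\trhb\left(G\right)=n\left(n-1\right)$, which pins every degree to $n-1$ and hence $G=K_{n}$, and the converse is a one-line computation since on $K_{n}$ all conductances equal $1$; the paper merely asserts this characterization. Your argument also transparently mirrors the classical proof of the bound for the ordinary Kirchhoff index, with $\trhb\left(G\right)$ playing the role of $2m$, which makes it a natural companion to the trace estimates already developed in Section~\ref{sec:spectrum}.
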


\begin{proof}
Let $\mathcal{S}$ be the set of all column vectors $x$ such that
$x\cdot x=1$, $x\cdot e$= 0. Then, it is known that 

\begin{equation}
\rho_{\alpha,2}=\min_{x\in S}x^{T}\mathscr{L}_{\alpha}x.
\end{equation}
Then, we can show that the matrix $\mathscr{\tilde{L}}_{\alpha}\coloneqq\mathscr{L}_{\alpha}-\rho_{\alpha,2}\left(I_{n}-n^{-1}J_{n}\right)$
is also positive semidefinite. Since $\mathscr{\tilde{L}}_{\alpha}e=0$
we have

\begin{equation}
y^{T}\mathscr{\tilde{L}}_{\alpha}y=c_{2}^{2}x^{T}\mathscr{\tilde{L}}_{\alpha}x=c_{2}^{2}\left(x^{T}\mathscr{L}_{\alpha}x-\rho_{\alpha,2}\right)\geq0.
\end{equation}
Thus

\begin{equation}
\min_{v}\mathscr{L}_{\alpha}\left(v,v\right)-\rho_{\alpha,2}\left(1-n^{-1}\right)\geq0.
\end{equation}
We can then write, 

\begin{equation}
\rho_{\alpha,2}\leq\dfrac{n}{n-1}\min_{v}\mathscr{L}_{\alpha}\left(v,v\right).
\end{equation}

Now, because $\min_{v}\mathscr{L}_{\alpha}\left(v,v\right)$ cannot
be larger than $\varDelta/\delta\leq n-1$ we have that $\rho_{\alpha,2}\leq n$
and then using the result in Corollary \ref{cor:bound RR} we have
$\mathcal{\mathcal{\mathscr{R}}_{\mathnormal{\alpha}}}\left(G\right)\geq n-1$.
The equality is obtained only for the case of the complete graph where
$\rho_{\alpha,k\neq2}=n-1$, which proves the final result.
\end{proof}
We now obtain some bounds for the Kirchhoff index for $\alpha=1$
and $\alpha=-1$, respectively.
(As usual, in the following $n$ denotes the number of vertices of $G$, while $\delta,\varDelta$ are its smallest and largest degree, respectively.)
\begin{lem}
Let $\alpha\in\{-1,1\}$. Then 
\begin{equation}
\mathcal{\mathcal{\mathscr{R}}_{\mathnormal{\alpha=1}}}\left(G\right){\color{black}{{\color{black}\geq\dfrac{n\left(n-1\right)\delta}{\varDelta^{2}}}}.}
\end{equation}
\end{lem}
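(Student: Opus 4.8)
The plan is to reduce the statement to a single upper bound on the largest hubs-repelling eigenvalue $\rho_{1,n}$ and then invoke the spectral formula for the Kirchhoff index. By Lemma~\ref{lem:Lemma_Kirchhoff}, $\mathscr{R}_1(G)=n\sum_{k=2}^n \rho_{1,k}^{-1}$, and bounding each of the $n-1$ nonzero terms below by $\rho_{1,n}^{-1}$ gives $\mathscr{R}_1(G)\ge n(n-1)/\rho_{1,n}$ --- this is precisely the left-hand (lower) estimate recorded in Corollary~\ref{cor:bound RR}. Consequently the whole statement follows as soon as one establishes an upper bound of the form $\rho_{1,n}\le \varDelta^2/\delta$: substituting it into $\mathscr{R}_1(G)\ge n(n-1)/\rho_{1,n}$ yields the claimed inequality directly.

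To bound $\rho_{1,n}$ I would exploit the structure from Theorem~\ref{thm:real spectrum}: $\mathscr{L}_1$ is positive semidefinite and similar to the symmetric matrix $\Xi_1-A$, so $\rho_{1,n}$ is the largest eigenvalue of $\Xi_1-A$ and is captured by a Rayleigh quotient. The relevant diagonal information is already isolated in Remark~\ref{rem:gers}: $(\mathscr{L}_1)_{vv}=\sum_{w\in\mathcal N_v}k_v/k_w\le k_v^2/\delta\le\varDelta^2/\delta$. The remaining work is to pass from this control of the diagonal to control of the top eigenvalue, for which the natural device is Ger\v{s}gorin's theorem applied to $\mathscr{L}_1$, exactly as in Remark~\ref{rem:gers}.

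The step I expect to be the main obstacle is obtaining the correct constant in this last passage. A direct application of Ger\v{s}gorin (or, equivalently, the elementary Rayleigh-quotient estimate $(k_i^{1/2}k_j^{-1/2}x_i-k_j^{1/2}k_i^{-1/2}x_j)^2\le 2(\tfrac{k_i}{k_j}x_i^2+\tfrac{k_j}{k_i}x_j^2)$ summed over edges) produces $\rho_{1,n}\le 2\max_v(\mathscr{L}_1)_{vv}\le 2\varDelta^2/\delta$, and hence only the slightly weaker bound $\mathscr{R}_1(G)\ge n(n-1)\delta/(2\varDelta^2)$; the factor $2$ is saturated on bipartite graphs, where the extremal vector makes the two boundary terms cancel across every edge, so eliminating it demands genuinely more than the diagonal estimate. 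I would therefore aim either to exploit the zero-row-sum (Laplacian) structure of $\mathscr{L}_1$ to sharpen the top-eigenvalue bound, or to confirm whether the displayed constant should in fact carry this factor. In any case, once the appropriate bound on $\rho_{1,n}$ is in hand, the conclusion is immediate from Corollary~\ref{cor:bound RR}.
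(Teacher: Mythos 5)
Your reconstruction follows the paper's own route --- Corollary~\ref{cor:bound RR} combined with the eigenvalue estimate from Remark~\ref{rem:gers} --- and the obstacle you flag is exactly the crux. The paper's proof asserts that ``we already know from Remark~\ref{rem:gers} that $\rho_{\alpha=1,n}\leq\varDelta^{2}/\delta$'', but that remark only proves the diagonal bound $\left(\mathscr{L}_{1}\right)_{vv}\leq\varDelta^{2}/\delta$ and hence, after the Ger\v{s}gorin doubling, $\rho_{1,n}\leq 2\varDelta^{2}/\delta$. So the paper's argument, read literally, yields only your weaker conclusion $\mathscr{R}_{\alpha=1}\left(G\right)\geq n\left(n-1\right)\delta/\left(2\varDelta^{2}\right)$; the citation of the remark without the factor $2$ is an error, not something you failed to see.

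Moreover, the factor cannot be removed, so do not spend effort sharpening the top-eigenvalue bound: both the factor-free estimate on $\rho_{1,n}$ and the lemma as printed are false. For $G=K_{n}$ the graph is regular, so $\mathscr{L}_{1}$ is the standard Laplacian, whence $\rho_{1,n}=n>n-1=\varDelta^{2}/\delta$, and
\begin{equation*}
\mathscr{R}_{\alpha=1}\left(K_{n}\right)=n\sum_{k=2}^{n}\frac{1}{n}=n-1<n=\frac{n\left(n-1\right)\delta}{\varDelta^{2}},
\end{equation*}
contradicting the claimed inequality (and also clashing with the paper's own earlier lemma that $\mathscr{R}_{\alpha}\left(G\right)\geq n-1$ with equality precisely for $K_{n}$). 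The same failure occurs for $K_{d,d}$, where $\mathscr{R}_{\alpha=1}=4d-3<4d-2$; as you observed, regular bipartite graphs saturate the Ger\v{s}gorin factor, since there $\rho_{1,n}=2d=2\varDelta^{2}/\delta$. So your closing suggestion to ``confirm whether the displayed constant should in fact carry this factor'' is the correct resolution: the statement that is actually true, and that your argument proves completely, is $\mathscr{R}_{\alpha=1}\left(G\right)\geq n\left(n-1\right)\delta/\left(2\varDelta^{2}\right)$.
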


\begin{proof}
We already know from Remark~\ref{rem:gers} that $\rho_{\alpha=1,n}\leq\varDelta^{2}/\delta$.
Then, $c_{\alpha=1}(v)=k_{v}\sum_{j\in\eta_{v}}k_{j}^{-1}\leq\varDelta^{2}/\delta$
and using the Corollary \ref{cor:bound RR} we obtain the result.
\end{proof}
\begin{rem}
The four graphs with the largest value of \textcolor{black}{$\mathcal{\mathcal{\mathscr{R}}_{\mathnormal{\alpha=1}}}\left(G\right)$
among all connected graphs with 8 vertices are illustrated in Fig. \ref{RR_max}.}
\end{rem}

\begin{figure}[h]
\begin{centering}
\includegraphics[width=1\textwidth]{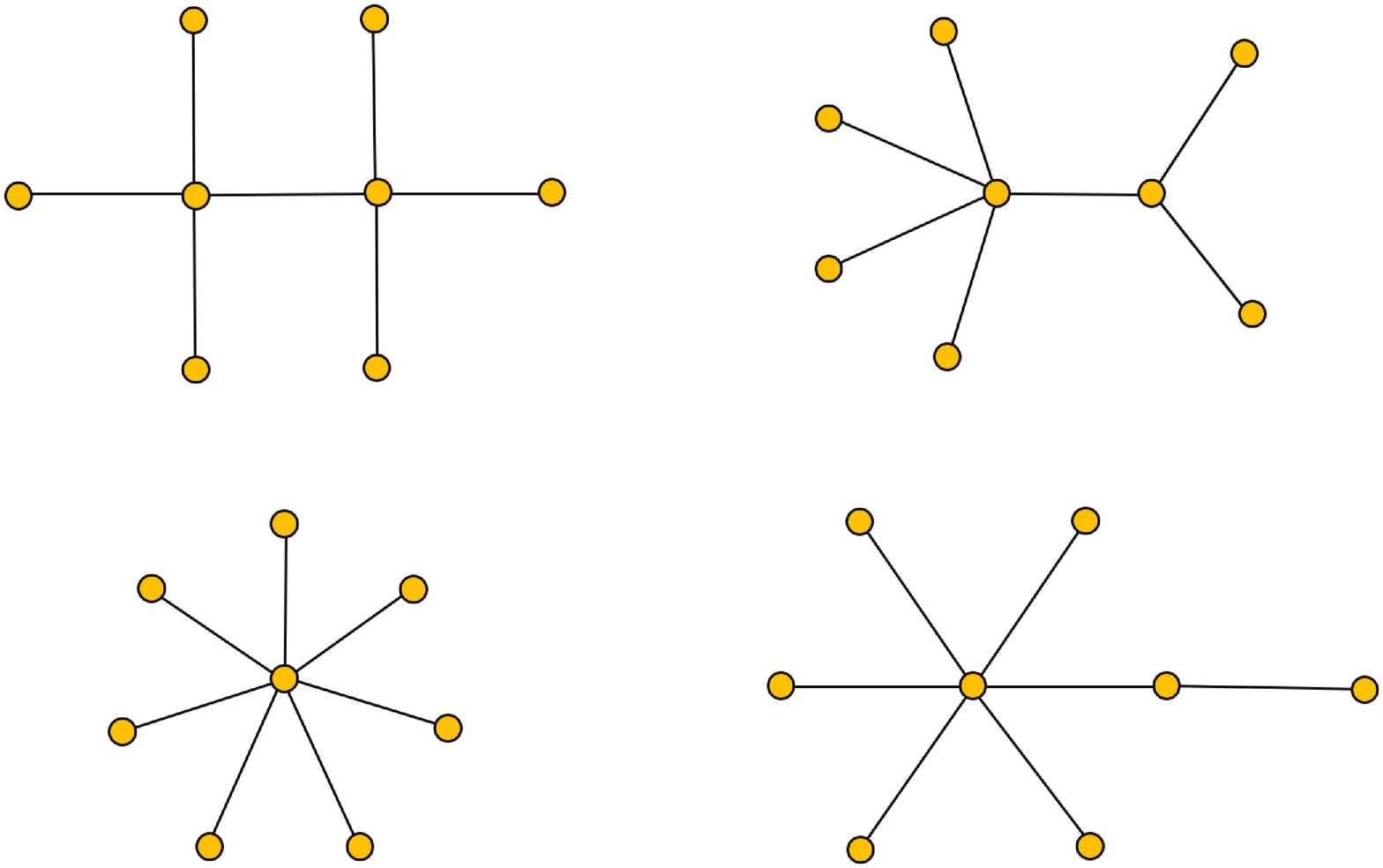}
\par\end{centering}
\caption{Graphs with the maximum values of 
$\mathcal{\mathcal{\mathscr{R}}_{\mathnormal{\alpha=1}}}\left(G\right)$
among all connected graphs with 8 vertices.}

\label{RR_max}
\end{figure}

\begin{lem}
Let $\alpha\in\{-1,1\}$. Then
\begin{equation}
\mathcal{\mathcal{\mathscr{R}}_{\mathnormal{\alpha=-1}}}\left(G\right){\color{black}{{\color{black}\geq\dfrac{n\left(n-1\right)}{2\varDelta}}}.}
\end{equation}
\end{lem}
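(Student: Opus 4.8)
The plan is to mirror exactly the proof of the preceding lemma (the $\alpha=1$ case), simply replacing the relevant eigenvalue bound. The whole argument reduces to combining a spectral upper bound on the \emph{largest} eigenvalue $\rho_{-1,n}$ with the general lower estimate for the Kirchhoff index already established in Corollary~\ref{cor:bound RR}.

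The key ingredient I would invoke is the bound
\[
\rho_{-1,n}\le 2\varDelta,
\]
which was derived in Remark~\ref{rem:gers} via Ger\v{s}gorin's theorem: there one computes $\left(\mathscr{L}_{-1}\right)_{vv}=\sum_{w\in\mathcal N_v}\frac{k_w}{k_v}\le\varDelta$, and the naive bound~\eqref{eq:naiveup} then gives $\rho_{-1,n}\le 2\max_v\left(\mathscr{L}_{-1}\right)_{vv}\le 2\varDelta$. Since this has already been proved earlier in the excerpt, I may simply cite it.

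With this in hand the conclusion is immediate. Corollary~\ref{cor:bound RR} provides
\[
\mathcal{\mathscr{R}}_{-1}\left(G\right)\ge\dfrac{n\left(n-1\right)}{\rho_{-1,n}},
\]
and since $\rho_{-1,n}\le 2\varDelta$ implies $\dfrac{1}{\rho_{-1,n}}\ge\dfrac{1}{2\varDelta}$, I conclude
\[
\mathcal{\mathscr{R}}_{-1}\left(G\right)\ge\dfrac{n\left(n-1\right)}{\rho_{-1,n}}\ge\dfrac{n\left(n-1\right)}{2\varDelta},
\]
as claimed.

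There is essentially no obstacle here: the entire difficulty has already been absorbed into the eigenvalue estimate of Remark~\ref{rem:gers}, so the proof is a one-line combination of two previously established facts. The only point worth a moment's care is to use the bound specific to $\alpha=-1$, namely $\rho_{-1,n}\le 2\varDelta$, rather than the weaker bound $\rho_{1,n}\le \varDelta^2/\delta$ used for the hubs-repelling case; choosing the wrong one would yield a different (and for $\alpha=-1$ incorrect) constant.
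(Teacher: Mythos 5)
Your proposal is correct and follows essentially the same route as the paper: cite the Ger\v{s}gorin-based bound $\rho_{-1,n}\le 2\varDelta$ from Remark~\ref{rem:gers} and combine it with the lower estimate in Corollary~\ref{cor:bound RR}. The paper's own proof adds only a brief (and not strictly necessary) aside about where $\max_i c_{\alpha=-1}(i)$ is attained, so your one-line combination captures the entire argument.
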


\begin{proof}
Again by Remark~\ref{rem:gers}, 
$\rho_{\alpha=-1,n}\le 2\varDelta$. 
Then, $\underset{i}{\max}c_{\alpha=-1}\left(i\right)=\underset{i}{\max}k_{i}^{-1}\sum_{v\in\eta_{i}}k_{v}$, with the maximum being attained  at those vertices $i$ with minimal degree $k_{i}=\delta$ and connected to $\delta$ vertices $v\in\eta_{i}$
which all have the maximum degree $\varDelta$. Thus, the result follows
using the Corollary \ref{cor:bound RR}.
\end{proof}

The four graphs with the largest value of $\mathcal{\mathcal{\mathscr{R}}_{\mathnormal{\alpha=-1}}}\left(G\right)$
among all connected graphs with 8 vertices are illustrated in Fig. \ref{QA_max}.

\begin{figure}[h]
\begin{centering}
\includegraphics[width=0.8\textwidth]{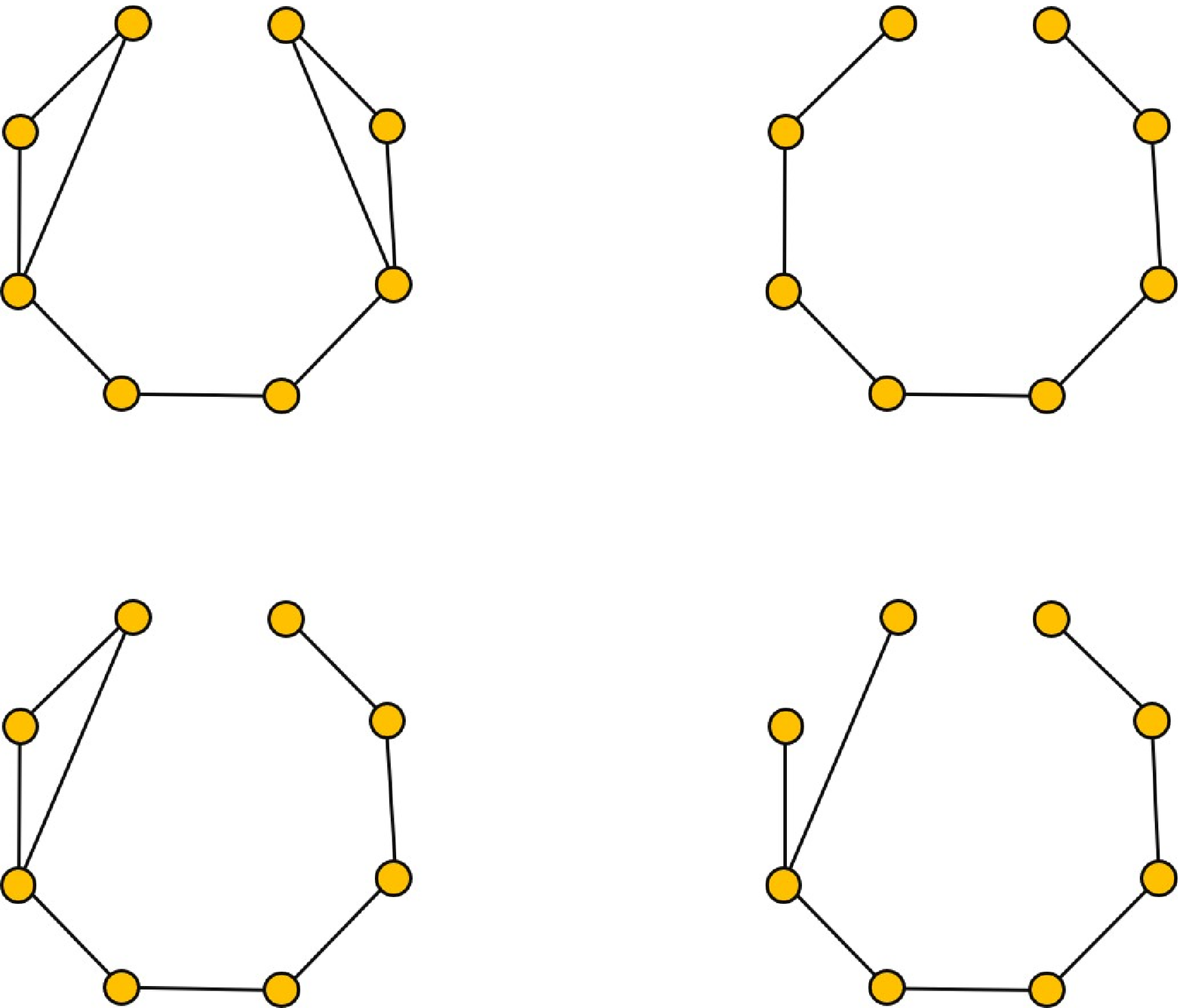}
\par\end{centering}
\caption{{Graphs with the maximum values of {$\mathcal{\mathcal{\mathscr{R}}_{\mathnormal{\alpha=-1}}}\left(G\right)$
among all connected graphs with 8 vertices.}}}\label{QA_max}
\end{figure}

\medskip{}

\begin{rem}
In $d$\textup{-regular} graphs, $c_{\alpha=1}\left(v,w\right)=c_{\alpha=-1}\left(v,w\right)=1$
for all $\left(v,w\right)\in E$. Thus, $\Omega_{\alpha=1}\left(v,w\right)=\Omega_{\alpha=-1}\left(v,w\right)=\Omega\left(v,w\right)$
for all $\left(v,w\right)\in E$, and $\mathcal{\mathcal{\mathscr{R}}_{\mathnormal{\alpha=1}}}\left(G\right)=\mathcal{\mathcal{\mathscr{R}}_{\mathnormal{\alpha=-1}}}\left(G\right)=\mathcal{\mathcal{\mathscr{R}}}_{\alpha=0}$.
\end{rem}

Then, we calculated the hubs-repelling $\mathcal{\mathcal{\mathscr{R}}_{\mathnormal{\alpha=1}}}\left(G\right)$,
attracting $\mathcal{\mathcal{\mathscr{R}}_{\mathnormal{\alpha=-1}}}\left(G\right)$
and normal $\mathcal{\mathcal{\mathscr{R}}}_{\alpha=0}\left(G\right)$
Kirchhoff indices for all connected graphs with $5\leq n\leq8$. In
general, we observed for these more than 12,000 graphs that: $\mathcal{\mathcal{\mathscr{R}}_{\mathnormal{\alpha=1}}}\left(G\right)\geq\mathcal{\mathcal{\mathscr{R}}}_{\alpha=0}\left(G\right)\geq\mathcal{\mathcal{\mathscr{R}}_{\mathnormal{\alpha=-1}}}\left(G\right)$.
Finally we formulate the following conjecture for the Kirchhoff indices
of graphs.
\begin{conj}
Let $\alpha\in\{-1,1\}$. Then
\begin{equation}
\mathcal{\mathcal{\mathscr{R}}_{\mathnormal{\alpha=1}}}\left(G\right)\geq\mathcal{\mathcal{\mathscr{R}}}_{\alpha=0}\left(G\right)\geq\mathcal{\mathcal{\mathscr{R}}_{\mathnormal{\alpha=-1}}}\left(G\right),
\end{equation}
with equality if and only if  $G$ is regular.
\end{conj}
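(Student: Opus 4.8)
The plan is to recast the three quantities spectrally and compare them. By Lemma~\ref{lem:Lemma_Kirchhoff} we have $\mathscr{R}_\alpha(G)=n\sum_{k=2}^{n}\rho_{\alpha,k}^{-1}=n\,\mathrm{tr}(\mathscr{L}_\alpha^{+})$, and by Theorem~\ref{thm:real spectrum} each $\mathscr{L}_\alpha$ is similar to the symmetric matrix $S_\alpha:=\Xi_\alpha-A$, so these reciprocal–eigenvalue sums may equally be studied on the symmetric $S_\alpha$. The regular case is the expected equality case: if $G$ is $d$-regular then $c_\alpha(v,w)\equiv 1$, whence $\mathscr{L}_1=\mathscr{L}_0=\mathscr{L}_{-1}$ and the three indices coincide. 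The real content is therefore to establish the two strict inequalities for non-regular $G$ and that equality forces regularity; for the latter I would anchor the argument on the fact that $\trhb(G)=\sum_{(v,w)\in E}\bigl(\tfrac{k_v}{k_w}+\tfrac{k_w}{k_v}\bigr)\ge 2m$ with equality iff $G$ is regular.

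A natural framework is to embed the discrete family into the one-parameter family $\{\mathscr{L}_\alpha\}_{\alpha\in\mathbb{R}}$ allowed by the Remark following Definition~\ref{def:laplacalpha} and to prove that $\alpha\mapsto\mathscr{R}_\alpha(G)$ is monotonically nondecreasing, $\alpha=0$ giving the standard Kirchhoff index; the conjecture is then the instance $\alpha\in\{-1,0,1\}$. Concretely, one would differentiate $g(\alpha):=\mathrm{tr}(\mathscr{L}_\alpha^{+})$ and try to show $g'(\alpha)\ge 0$. Here the right kernel $\langle\mathbf 1\rangle$ is independent of $\alpha$, but the left kernel $\langle(k_v^{-2\alpha})_v\rangle$ is not, so the derivative of the pseudoinverse carries projection-correction terms of Golub--Pereyra type; controlling the sign of the resulting trace, in which $\dot{\mathscr{L}}_\alpha(v,w)=\mathscr{L}_\alpha(v,w)\log(k_v/k_w)$ off the diagonal, is the crux.

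For the single comparison $\mathscr{R}_1(G)\ge\mathscr{R}_{-1}(G)$ there is an additional lever that sidesteps calculus: Lemma~\ref{lem:Lemma_trace} gives $\mathrm{tr}(\mathscr{L}_1)=\mathrm{tr}(\mathscr{L}_{-1})=\trhb(G)$, so the spectra $(\rho_{1,k})_{k\ge 2}$ and $(\rho_{-1,k})_{k\ge 2}$ have equal sum. Since $t\mapsto t^{-1}$ is strictly convex on $(0,\infty)$ and $\sum_k\rho_k^{-1}$ is therefore Schur-convex, it would suffice to show that the $\alpha=1$ spectrum majorizes the $\alpha=-1$ spectrum, with equality only when the two spectra coincide. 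The obstruction is precisely this majorization. It cannot come from a Loewner comparison: writing $S_1-S_{-1}=\Xi_1-\Xi_{-1}=\diag\bigl(c_1(v)-c_{-1}(v)\bigr)$, the diagonal is traceless but sign-indefinite (for instance $c_1(v)<c_{-1}(v)$ at a low-degree vertex surrounded by hubs), so $S_1$ and $S_{-1}$ are not ordered as positive semidefinite matrices and no eigenvalue-by-eigenvalue domination is available.

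I expect this last point to be the main obstacle, and the reason the statement is only a conjecture: the desired inequality is a global spectral (majorization) phenomenon rather than a monotone one, so the usual edge-wise Rayleigh-monotonicity and Loewner-order techniques for resistance distances do not apply — a symptom of the non-reversibility (asymmetry) of $\mathscr{L}_\alpha$. A realistic route to partial results is to first settle tractable subclasses where the spectrum is explicit or where $S_\alpha$ reduces to a symmetric perturbation of the standard Laplacian — trees, complete bipartite graphs $K_{p,q}$ (for which $\trhb(G)=2pq$ is already recorded in the Remark after Lemma~\ref{lem:Lemma_trace}), and bi-regular graphs — combining the trace identity with the interlacing-type controls on $\rho_{\alpha,2}$ and $\rho_{\alpha,n}$ from Corollary~\ref{cor:spectral_bound} and Remark~\ref{rem:gers} to bound the tails of $\sum_k\rho_{\alpha,k}^{-1}$. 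The comparison with $\alpha=0$ is harder still, since $\mathrm{tr}(\mathscr{L}_0)=2m\le\trhb(G)$ differs from the $\alpha=\pm1$ trace, so the equal-sum majorization argument is unavailable there and one must fall back on the derivative-sign approach of the second paragraph.
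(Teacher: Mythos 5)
This statement is a \emph{conjecture} in the paper: there is no proof to compare your attempt against. The authors' entire support for it is an exhaustive computation of $\mathscr{R}_{\alpha=1}(G)$, $\mathscr{R}_{\alpha=0}(G)$ and $\mathscr{R}_{\alpha=-1}(G)$ over all (more than 12{,}000) connected graphs with $5\le n\le 8$, after which the inequality, with equality iff $G$ is regular, is stated as open. You recognized this status correctly, and your proposal is candid that it is a research programme rather than a proof. Judged on that basis, its verifiable ingredients are sound: the spectral rewriting $\mathscr{R}_\alpha(G)=n\,\mathrm{tr}(\mathscr{L}_\alpha^{+})$ via Lemma~\ref{lem:Lemma_Kirchhoff}; the ``if'' half of the equality case (regularity forces $c_\alpha(v,w)\equiv 1$, hence all three Laplacians coincide); the left-kernel computation (indeed $y^T\mathscr{L}_\alpha=0$ iff $K^\alpha y\in\ker(\Xi_\alpha-A)=\langle K^{-\alpha}\mathbf{1}\rangle$, i.e.\ $y\in\langle K^{-2\alpha}\mathbf{1}\rangle$); the equal-trace observation from Lemma~\ref{lem:Lemma_trace}, which makes Schur-convexity of $\sum_k\rho_k^{-1}$ the natural tool for $\mathscr{R}_1\ge\mathscr{R}_{-1}$; and the remark that $\Xi_1-\Xi_{-1}$ is traceless and sign-indefinite, so no Loewner comparison exists. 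In fact the obstruction is stronger than you state: since the nonzero spectra have equal sums, a Loewner ordering of $\Xi_1-A$ and $\Xi_{-1}-A$ would force the two spectra to be \emph{identical}, so eigenvalue-by-eigenvalue domination could never produce the strict inequality for non-regular graphs; majorization really is the minimal workable target.

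The genuine gaps are the ones you flag yourself: (i) the majorization of the $\alpha=-1$ spectrum by the $\alpha=1$ spectrum is a target, not a result; (ii) monotonicity of $\alpha\mapsto\mathscr{R}_\alpha(G)$, which would subsume the whole conjecture including the harder comparison with $\alpha=0$ (where the traces $2m$ and $\trhb(G)$ differ), hinges on a derivative sign you do not control; and (iii) the ``only if'' half of the equality case is untouched --- equality of Kirchhoff indices does not obviously transfer to equality of traces, so anchoring on $\trhb(G)\ge 2m$ (equality iff $G$ is regular, for connected $G$) is a heuristic, not an argument. None of this puts you behind the paper, which proves nothing here either; but as written your proposal settles only the regular-implies-equality direction, and otherwise maps obstacles. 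If you pursue it, your subclass strategy (trees, $K_{p,q}$, bi-regular graphs, where the spectra of $\Xi_\alpha-A$ are explicitly computable) is the realistic entry point, and any such partial result would already go beyond what the paper establishes.
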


\section{Computational results}

\subsection{Random-walks connection}

Let us consider a ``particle'' performing a standard random walk
through the vertices and edges of any of the networks studied here. The
use of random walks in graphs \cite{random walks_1} and networks
\cite{diffusion on networks} is one of the most fundamental types
of stochastic processes, used to model diffusive processes, different
kinds of interactions, and opinions among humans and animals \cite{diffusion on networks}.
They can also be used as a way to extract information about the structure
of networks, including the detection of dense groups of entities in
a network \cite{diffusion on networks}. Then, we consider a random
walk on $G$, which represents the real-world network under consideration.
We start at a vertex $v_{0}$; if at the $r$th step we are at a vertex
$v_{r}$, we move to any neighbor of $v_{r}$, with probability $k_{r}^{-1},$
where $k_{r}$ is the degree of the vertex $r$. Clearly, the sequence
of random vertices ($v_{t}:t=0,1,\ldots$) is a Markov chain \cite{random walks_1}.

An important quantity in the study of random walks on graphs is the
access or hitting time $\mathcal{H}\left(v,w\right),$ which is the
number of steps before vertex $w$ is visited, starting from
vertex $v$ \cite{random walks_1}. The sum $\mathcal{C}\left(v,w\right)=\mathcal{H}\left(v,w\right)+\mathcal{H}\left(w,v\right)$
is called the commute time, which is the number of steps
in a random walk starting at $v$, before vertex $w$ is visited and
then the walker comes back again to vertex $v$ \cite{random walks_1}.
The connection between random walks and resistance distance on graphs
is then provided by the following result (see for instance \cite{random_walk_1.5}), {where we as usual denote by $c\left(v\right)$ the vertex conductances of vertices $v$.}
\begin{lem}
For any two vertices $v$ and $w$ in $G$, the commute
time is
\begin{equation}
\mathcal{C}\left(v,w\right)=\textnormal{vol}(G)\Omega\left(v,w\right).
\end{equation}
\end{lem}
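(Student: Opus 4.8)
The plan is to identify the random-walk hitting times with electrical potentials in the resistive network associated with $G$ (where each edge carries unit conductance, so that the vertex conductances are $c(x)=k_x$ and $\textnormal{vol}(G)=\sum_{x\in V}k_x$), and then to exploit linearity of the potential to collapse the commute time into a single source-sink current problem. First I would recall that the hitting times obey the first-step recursion
\begin{equation*}
\mathcal{H}(x,w)=1+\frac{1}{k_x}\sum_{u\in\mathcal{N}_x}\mathcal{H}(u,w),\qquad x\neq w,
\end{equation*}
together with the boundary condition $\mathcal{H}(w,w)=0$. Since the corresponding linear system has a unique solution, it suffices to exhibit \emph{any} function satisfying the same recursion and boundary value.

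Next I would consider the current flow obtained by injecting $c(x)=k_x$ units of current at every vertex $x\in V$ and withdrawing the total amount $\textnormal{vol}(G)$ at the single vertex $w$. Writing $\phi_w$ for the resulting potential, Kirchhoff's current law at any $x\neq w$ reads $\sum_{u\in\mathcal{N}_x}\bigl(\phi_w(x)-\phi_w(u)\bigr)=k_x$, which rearranges to
\begin{equation*}
\phi_w(x)-\phi_w(w)=1+\frac{1}{k_x}\sum_{u\in\mathcal{N}_x}\bigl(\phi_w(u)-\phi_w(w)\bigr).
\end{equation*}
By the uniqueness just mentioned, $\phi_w(x)-\phi_w(w)=\mathcal{H}(x,w)$ for every $x$; in particular $\phi_w(v)-\phi_w(w)=\mathcal{H}(v,w)$. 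Symmetrically, injecting $\textnormal{vol}(G)$ at $v$ and withdrawing $k_x$ at each $x$ yields a potential $\phi_v$ with $\phi_v(v)-\phi_v(x)=\mathcal{H}(x,v)$, so that $\phi_v(v)-\phi_v(w)=\mathcal{H}(w,v)$.

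Finally I would superpose the two flows. By linearity of Kirchhoff's and Ohm's laws, the contributions $\pm k_x$ cancel at every vertex other than $v$ and $w$, so the superposed flow injects $\textnormal{vol}(G)$ at $v$ and withdraws $\textnormal{vol}(G)$ at $w$; it is therefore a single current of magnitude $I=\textnormal{vol}(G)$ driven from $v$ to $w$. Its potential drop is
\begin{equation*}
\bigl(\phi_w+\phi_v\bigr)(v)-\bigl(\phi_w+\phi_v\bigr)(w)=\mathcal{H}(v,w)+\mathcal{H}(w,v)=\mathcal{C}(v,w),
\end{equation*}
whereas Ohm's law identifies the same drop with $I\,\Omega(v,w)=\textnormal{vol}(G)\,\Omega(v,w)$, giving the claim. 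The main obstacle is the identification step of the second paragraph: one must verify that this particular injection profile is exactly the one whose potential reproduces the hitting-time recursion, and then invoke uniqueness of the solution (equivalently, that a function harmonic off $w$ with prescribed value there is determined by that boundary datum) in order to obtain the sharp equality $\phi_w(x)-\phi_w(w)=\mathcal{H}(x,w)$ rather than a mere inequality.
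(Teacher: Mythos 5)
Your proof is correct. Note, however, that the paper does not actually prove this lemma: it is quoted from the literature, with a pointer to Aldous and Fill \cite{random_walk_1.5}, so there is no internal proof to compare against. Your argument --- realizing the hitting time $\mathcal{H}(\cdot,w)$ as the potential of the flow that injects $c(x)$ units of current at every vertex $x$ and extracts $\textnormal{vol}(G)$ at $w$, then superposing two such flows so that all injections cancel except for $+\textnormal{vol}(G)$ at $v$ and $-\textnormal{vol}(G)$ at $w$ --- is precisely the classical proof (Chandra--Raghavan--Ruzzo--Smolensky--Tiwari, Tetali) presented in the reference the paper cites; in effect you have filled in the proof the paper omits. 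Two remarks. First, the uniqueness step you single out as the main obstacle is unproblematic: the difference of two solutions of the recursion is harmonic on $V\setminus\{w\}$ and vanishes at $w$, hence vanishes identically when $G$ is connected (maximum principle, equivalently invertibility of the grounded Laplacian); connectivity is the only hypothesis you should state explicitly, and it is needed anyway for the commute times and $\Omega(v,w)$ to be finite. Second, you specialize to unit edge conductances, whereas the paper states the lemma with $\textnormal{vol}(G)=\sum_{v\in V}c(v)$ for a general conductance function; your argument carries over verbatim on replacing $k_x$ by $c(x)$, the transition probabilities by $c(x,u)/c(x)$, and the unweighted Kirchhoff sums by conductance-weighted ones, so the statement in the paper's generality follows by the same superposition.
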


Here and in the following, $\textnormal{vol}(G)=\sum\limits_{v=1}^{n}c\left(v\right)$ is volume of the graph.
(Notice that if the graph is unweighted (the case formally corresponding to $\alpha= 0$) then by the Handshaking Lemma $\textnormal{vol}(G)=2m$.)
 The ``efficiency'' of a standard random
walk process on $G$ can then be measured by

\begin{equation}
\varepsilon\left(G\right)=1/\sum_{v,w}{\mathcal C}\left(v,w\right),
\end{equation}
That is, if a standard random walker on a graph uses small times to
commute between every pair of vertices in the graph, it is an efficient
navigational process. On the contrary, large commuting times between
pairs of vertices reveal very inefficient processes. Obviously, $\varepsilon\left(G\right)=1/\left(\textnormal{vol}(G)\sum_{v,w}\Omega\left(v,w\right)\right)$.

We now extend these concepts to the use of hubs-biased random walks
and calculated the Kirchhoff indices $\mathcal{\mathcal{\mathscr{R}}}\left(G\right),$
$\mathcal{\mathcal{\mathscr{R}}_{\mathnormal{\alpha=-1}}}\left(G\right)$
and $\mathcal{\mathcal{\mathscr{R}}_{\mathnormal{\alpha=1}}}\left(G\right)$
for all these networks. Following a similar reasoning as before we
define the efficiencies of the hubs-biased random walks by:

\begin{equation}
\varepsilon_{\alpha}\left(G\right)\coloneqq1/\left(\textnormal{vol}_{\alpha}\sum_{v,w}{\Omega}_{\alpha}\left(v,w\right)\right),
\end{equation}
where $\textnormal{vol}_{\alpha}$ is the volume of the graph with conductances based
on $\alpha$. We are interested here in the efficiency of the hubs-biased
random walk processes relative to the standard random walk. We propose
to measure these relative efficiencies by

\begin{equation}
\mathscr{E}_{\alpha}\left(G\right)\coloneqq\dfrac{\varepsilon_{\alpha}\left(G\right)}{\varepsilon\left(G\right)}=\dfrac{\textnormal{vol}}{\textnormal{vol}_{\alpha}}\dfrac{\mathcal{\mathcal{\mathscr{R}}}\left(G\right)}{\mathcal{\mathcal{\mathscr{R}}_{\mathnormal{\alpha}}}\left(G\right)}.
\end{equation}

When $\mathscr{E}_{\alpha}\left(G\right)>1$ the hubs-biased random
walk is more efficient than the standard random walk. On the other
hand, when $\mathscr{E}_{\alpha}\left(G\right)<1$ the standard random
walk is more efficient than the hubs-biased one. When the efficiency
of both processes, hubs-biased and standard, are similar we have $\mathscr{E}_{\alpha}\left(G\right)\approx1$. 

\subsection{Efficiency in small graphs}

We start by analyzing the 11,117 connected graphs with 8 vertices that
we studied previously. In Fig. \ref{All_8_vertices} we illustrate the
results in a graphical way. As can be seen
\begin{itemize}
\item $\mathscr{E}_{\alpha=-1}\left(G\right)>1$ for 95.7\% of the graphs
considered (10,640 out of 11,117), indicating that in the majority
of graphs a hubs-attracting random walk can be more efficient than
the standard random walk {driven by the unweighted Laplacian, corresponding to $\alpha= 0$};
\item $\mathscr{E}_{\alpha=1}\left(G\right)>1$ only in 91 out of the 11,117
graphs, which indicates that hubs-repelling random walks are more
efficient than the standard random walk only for very few graphs;
\item All graphs for which $\mathscr{E}_{\alpha=1}\left(G\right)\geq1$
also have $\mathscr{E}_{\alpha=-1}\left(G\right)\geq1$, with equality
only for regular graphs;
\item Only 461 graphs (4.15\% of all graphs considered) have simultaneously
$\mathscr{E}_{\alpha=1}\left(G\right)<1$ and $\mathscr{E}_{\alpha=-1}\left(G\right)<1$.
These are graphs for which the standard random walk is more efficient than both hubs-biased random walks;
\item Only 17 graphs have $\mathscr{E}_{\alpha=1}\left(G\right)=\mathscr{E}_{\alpha=-1}\left(G\right)=1$. They are all the regular graphs having 8 nodes that exist.
\end{itemize}
\begin{figure}
\begin{centering}
\includegraphics[width=0.8\textwidth]{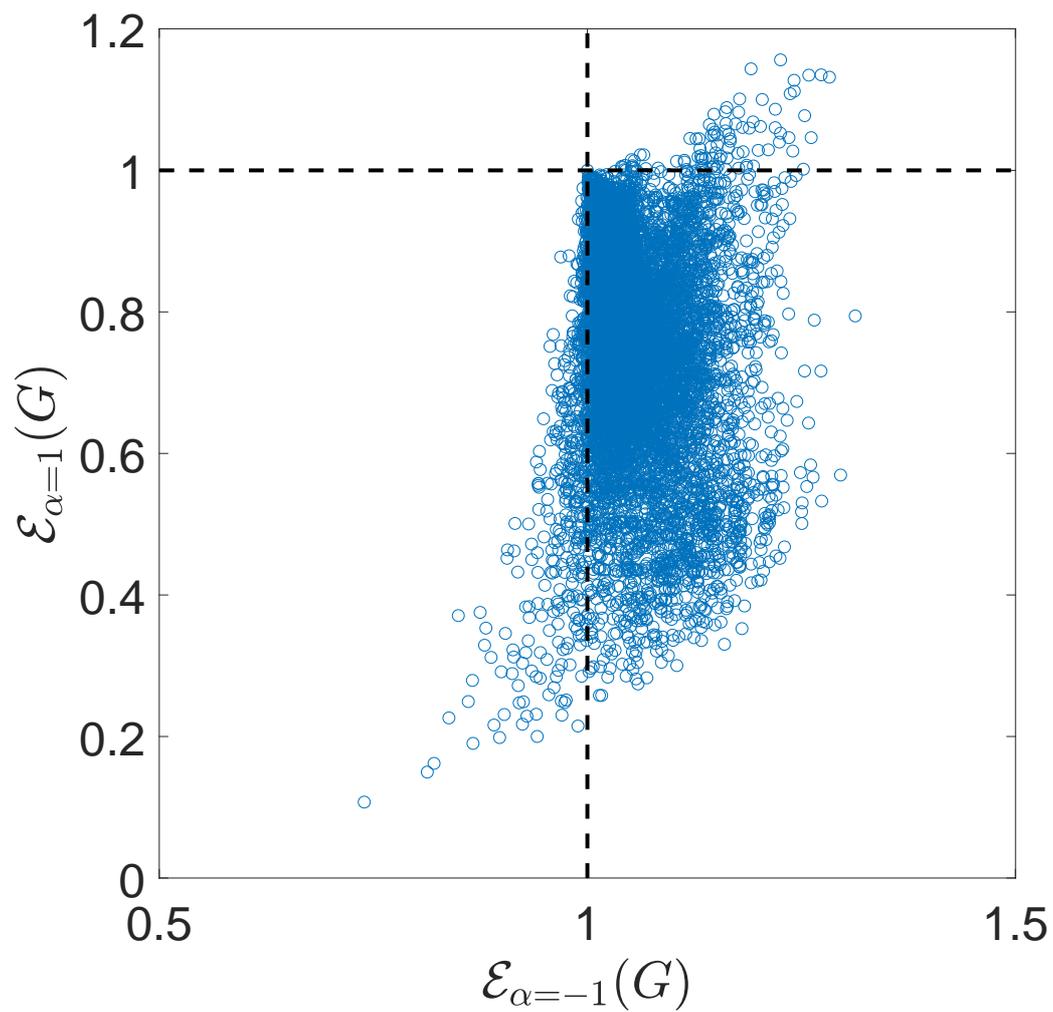}
\par\end{centering}
\caption{Plot of the efficiency of hubs-biased random walks relative to the
standard one for all 11,117 connected graphs with 8 vertices. }

\label{All_8_vertices}
\end{figure}

These results bring some interesting hints about the structural characteristics
that the graphs have to display to be benefited from hubs-biased processes.
For instance, the fact that most of the graphs can be benefited from
hubs-attracting ($\alpha=-1$) random walks is explained as follow.
A standard random walk typically does not follow the shortest topological
path connecting a pair of non-connected vertices. However, the number
of shortest paths crossing a vertex increases with the degree of that
vertex. For instance, let $k_{v}$ and $t_{v}$ be the degree and the
number of triangles incident to a vertex $v$. The number of shortest
paths connecting pairs of vertices is $P\geq\dfrac{1}{2}k_{v}\left(k_{v}-1\right)-t_{v}$.
Therefore, the hubs-attracting strategy induces the random walker
to navigate the network using many of the shortest paths interconnecting
pairs of vertices, which obviously decreases the commute time and increases
the efficiency of the process. Most networks can be benefited from
this strategy.

The case of the hubs-repelling strategies is more subtle. To reveal
the details we illustrate the four graphs having the minimum efficiency
of a hubs-repelling ($\alpha=1$) random walk in Fig. \ref{ER_minimum}.
As can be seen all these graphs have star-like structures, which are
the graphs with the largest possible degree heterogeneity \cite{heterogeneity_1,heterogeneity_2}.
Therefore, in these graphs the use of hubs-repelling strategies lets the random walker get trapped in small degree vertices without the
possibility of visiting other vertices, because for such navigation they
have to cross the hubs of the graph, a process which is impeded by
the hubs-repelling strategy.

\begin{figure}
\begin{centering}
\includegraphics[width=1\textwidth]{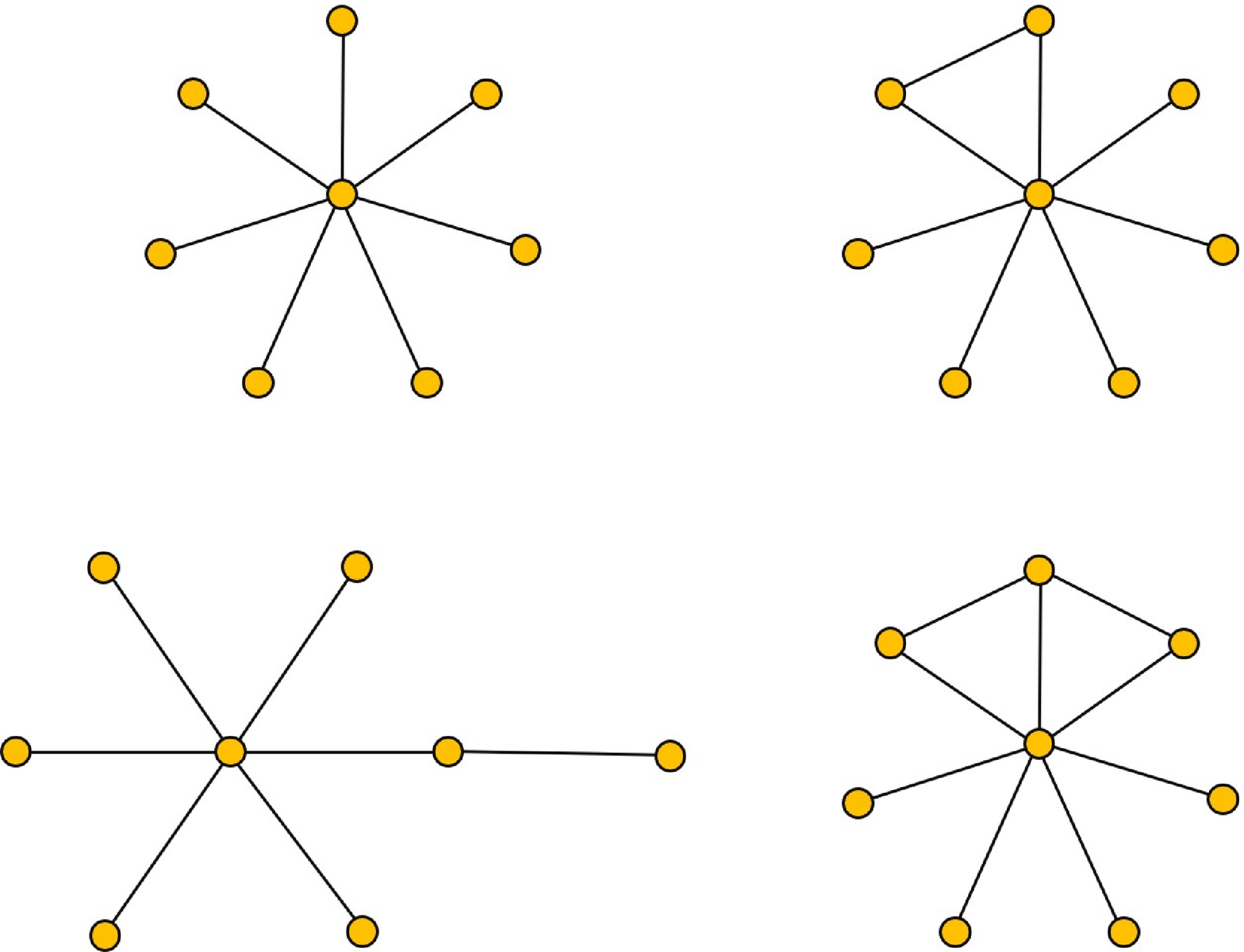}
\par\end{centering}
\caption{Illustration of the graphs with the minimum values of $\mathscr{E}_{\alpha=1}\left(G\right)$
among the 11,117 connected graphs with 8 vertices. }

\label{ER_minimum}
\end{figure}

The previous analysis allows us to consider the reason why so little
number of graphs display $\mathscr{E}_{\alpha=1}\left(G\right)>1$.
Such graphs have to display very small degree heterogeneity, but without
being regular, as for regular graphs $\mathscr{E}_{\alpha=1}\left(G\right)=1$.
The four graphs with the highest value of $\mathscr{E}_{\alpha=1}\left(G\right)$
among all connected graphs with 8 vertices are illustrated in Fig. \ref{ER_maximum}.
As can be seen these graphs display ``quasi-regular'' structures
but having at least one pendant vertex connected to another low-degree
vertex. Then, in a hubs-repelling strategy this relatively isolated
vertex (the pendant one) has larger changes (than in a standard random
walk) of being visited by the random walker who is escaping from the
vertices of larger degree.

\begin{figure}
\begin{centering}
\includegraphics[width=1\textwidth]{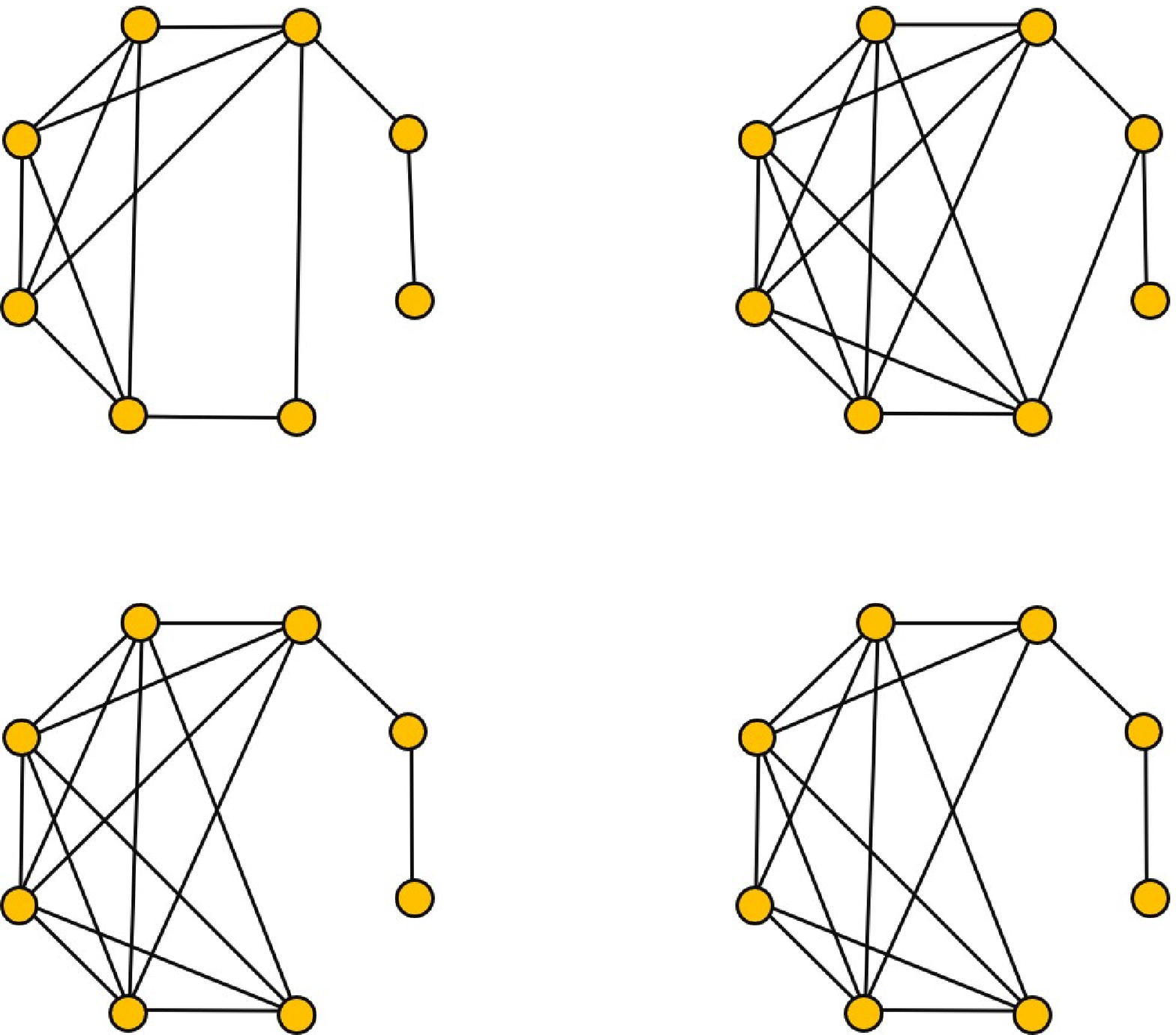}
\par\end{centering}
\caption{Illustration of the graphs with the maximum values of $\mathscr{E}_{\alpha=1}\left(G\right)$
among the 11,117 connected graphs with 8 vertices. }

\label{ER_maximum}
\end{figure}

In closing, we have observed that hubs-attracting random walks are
very efficient in most of graphs due to the fact that such processes
increase the chances of navigating the graph through their shortest
paths. On the other hand, hubs-repelling random walks are efficient
only in those quasi-regular graphs having some relatively isolated
vertices which can be visited by the hubs-repelling walker with higher
chances than in a standard random walk.

\subsection{Efficiency in real-world networks}

Here we study 59 real-world networks representing brain/neuronal systems,
electronic circuits, social systems, food webs, protein-protein interaction
networks (PIN), modular software, citation networks, transcription
networks and infrastructure networks. The description of all the networks
is in the Appendix of the book \cite{Estrada book}. 

The results obtained here for the 59 real-world networks are resumed
in Table \ref{real-world} where we report the average values of the
previous indices for groups of networks in different functional classes,
i.e., brain networks, electronic circuits, social networks, etc. 

\begin{table}
\begin{centering}
\begin{tabular}{|c|c|c|c||c|c|}
\hline 
type & number & $\mathscr{\bar{E}}_{\alpha=1}\left(G\right)$ & std & $\mathscr{\bar{E}}_{\alpha=-1}\left(G\right)$ & std\tabularnewline
\hline 
\hline 
brain & 3 & 0.6365 & 0.2552 & 0.9938 & 0.0489\tabularnewline
\hline 
circuits & 3 & 0.5187 & 0.0277 & 1.0662 & 0.0095\tabularnewline
\hline 
foodweb & 14 & 0.4134 & 0.2715 & 1.1247 & 0.2545\tabularnewline
\hline 
social & 12 & 0.3536 & 0.1917 & 1.0103 & 0.1812\tabularnewline
\hline 
citations & 7 & 0.2032 & 0.1758 & 0.8199 & 0.2769\tabularnewline
\hline 
PIN & 8 & 0.1385 & 0.0896 & 0.7855 & 0.1864\tabularnewline
\hline 
infrastructure & 4 & 0.0869 & 0.1439 & 0.4846 & 0.4638\tabularnewline
\hline 
software & 5 & 0.0712 & 0.0296 & 0.6148 & 0.1515\tabularnewline
\hline 
transcription & 3 & 0.0711 & 0.0710 & 0.5806 & 0.3218\tabularnewline
\hline 
\end{tabular}
\par\end{centering}
\caption{Average values of the relative efficiency of using a hubs-attracting
random walk on the graph with respect to the use of the standard random walk:
$\mathscr{\bar{E}}_{\alpha=-1}\left(G\right)$ for different networks
grouped in different classes. The number of networks in each class
is given in the column labeled as ``number''. The same for a hubs-repelling
random walk: $\mathscr{\bar{E}}_{\alpha=1}\left(G\right)$. In both
cases the standard deviations of the samples of networks in each class
is also reported.}

\label{real-world}
\end{table}

The main observations from the analysis of these real-world networks
are:
\begin{itemize}
\item $\mathscr{E}_{\alpha=-1}\left(G\right)>1$ for 50.8\% of the networks
considered, indicating that only in half of the networks a hubs-attracting
random walk can be more efficient than the standard random walk;
\item $\mathscr{E}_{\alpha=1}\left(G\right)>1$ in none of the networks,
which indicates that standard random walks are always more efficient
than hubs-repelling random walks in all these networks.
\end{itemize}
The first result is understood by the large variability in the degree
heterogeneity of real-world networks. In this case, only those networks
with skew degree distributions are benefited from the use of hubs-attracting
random walks, while in those with more regular structures are not. 

The second result indicates that there are no network with such quasi-regular
structures where some of the vertices are relatively isolated as the
graphs displayed in Fig. \ref{ER_maximum}. However, as usual the
devil is in the details. The analysis of the results in Table \ref{real-world}
indicates that brain networks, followed closely by electronic flip-flop
circuits, are the networks in which the use of hubs-repelling strategies
of navigation produces the highest efficiency relative to standard
random walks. This can also be read as that these brain networks have
evolved in a way in which their topologies guarantee random walk processes
as efficient as the hubs-attracting ones without the necessity of
navigating the brain using such specific mechanisms. In addition,
the use of hubs-repelling processes do not affect significantly the
average efficiency of brain networks, as indicated by the value of
$\mathscr{\bar{E}}_{\alpha=1}\left(G\right),$ which is very close
to one. This result indicates that if these networks have to use a
hubs-repelling strategies of navigation due to certain biological
constraints, they have topologies which are minimally affected  --  in
terms of efficiency  --  when using such strategies.

Finally, another remarkable result is that the efficiency of navigational
processes in infrastructural and modular software systems is more
than 1000\% efficient by using normal random-walk approaches than
by using hubs-repelling strategies. Those infrastructural networks
seem to be wired to be navigated by using their hubs, and avoiding
them cost a lot in terms of efficiency. This is clearly observed in
many transportation networks, such as air transportation networks,
where the connection between pairs of airports is realized through
the intermediate of a major airport, i.e., a hub, in the network.

\section{Conclusions}

We have introduced the concept of hubs-biased resistance distance.
These Euclidean distances are based on graph Laplacians which consider
the edges $e=\left(v,w\right)$ of a graph weighted by the degrees
of the vertices $v$ and $w$ in a double orientation of that edge.
Therefore, the hubs-biased Laplacian matrices are non-symmetric and
reflect the capacity of a graph/network to diffuse particles using
hubs-attractive or hubs-repulsive strategies. The corresponding hubs-biased
resistance distances and the corresponding Kirchhoff indices can be
seen as the efficiencies of these hubs-attracting/repelling random
walks of graphs/networks. We have proved several mathematical results
for both the hubs-biased Laplacian matrices and the corresponding
resistances and Kirchhoff indices. Finally we studied a large number
of real-world networks representing a variety of complex systems in
nature, and society. All in all we have seen that there are networks
which have evolved, or have being designed, to operate efficiently
under hubs-attracting strategies. Other networks, like brain ones,
are {almost immune to the change of strategies,
because the use of hubs-attracting strategies improve very little
the efficiency of a standard random walk, and the efficiency of hubs-repelling
strategies is not significantly different than that of the classical
random walks. Therefore, in such networks the use of the standard
random walk approach is an efficient strategy of navigation, while
infrastructures and modular software networks seem to be designed
to be navigated by using their hubs.}

\section*{Acknowledgment}

The work of D.M. was supported by the Deutsche Forschungsgemeinschaft
(Grant 397230547). E.E. thanks financial support from Ministerio de
Ciencia, Innovacion y Universidades, Spain for the grant PID2019-107603GB-I00
``Hubs-repelling/attracting Laplacian operators and related dynamics
on graphs/networks''. This article is based upon work from COST Action 18232 MAT-DYN-NET, supported by COST (European Cooperation in Science and Technology), www.cost.eu.


\end{document}